
\documentclass[12pt]{article}

\usepackage{amssymb}
\usepackage{amsmath}
\usepackage{amsbsy}
\usepackage{amscd}
\usepackage{amsfonts}
\usepackage{amsthm}
\usepackage{mathrsfs}
\usepackage{verbatim}
\usepackage[colorlinks]{hyperref}
\usepackage{fullpage}
\usepackage{mathdots}
\usepackage{graphicx,subfigure}
\usepackage[english]{babel}
\usepackage[utf8]{inputenc}
\usepackage{tikz}
\usepackage{stackrel}
\usepackage{authblk}

\usepackage{tikz-cd}

\usepackage{mathtext}

\usetikzlibrary{backgrounds,fit, matrix}
\usetikzlibrary{positioning}
\usetikzlibrary{calc,through,chains}
\usetikzlibrary{arrows,shapes,snakes,automata, petri}

\usepackage{booktabs}
\usepackage{adjustbox}

\newtheorem{Theorem}[equation]{Theorem}
\newtheorem{Corollary}[equation]{Corollary}
\newtheorem{Lemma}[equation]{Lemma}
\newtheorem{Proposition}[equation]{Proposition}

\theoremstyle{definition}
\newtheorem{Definition}[equation]{Definition}
\newtheorem{Example}[equation]{Example}

\newtheorem{Remark}[equation]{Remark}

\numberwithin{equation}{section}
\numberwithin{figure}{section}

\newcommand{\C}{{\mathbb C}}
\newcommand{\Z}{{\mathbb Z}}
\newcommand{\Q}{{\mathbb Q}}

\newcommand{\N}{{\mathbb N}}

\newcommand{\mc}[1]{\mathcal{#1}}

\newcommand{\mt}[1]{\text{#1}}

\newcommand{\mbf}[1]{\mathbf{#1}} 

\newcommand{\x}{\times}

\newcommand{\conj}[2]{~^{#1}\!{#2}}

\begin{document}

\title{Sects}

\author{Aram Bingham, Mahir Bilen Can}

\maketitle

\begin{abstract}

By explicitly describing a cellular decomposition 
we determine the Borel invariant cycles that generate the 
Chow groups of the quotient of a reductive group by a Levi subgroup. 
For illustrations we consider the 
variety of polarizations $\mbf{SL}_n / \mbf{S}(\mbf{GL}_p\times \mbf{GL}_q)$, 
and we introduce the notion of a sect for describing its cellular decomposition.
In particular, for $p=q$, we show that the Bruhat order on the 
sect corresponding to the dense cell is isomorphic, as a 
poset, to the rook monoid with the Bruhat-Chevalley-Renner order.

\vspace{.5cm}

\noindent 
\textbf{Keywords:} Flag variety, Levi subgroup, fibre bundle, rook monoid\\

\noindent 
\textbf{MSC:} 14M17, 14C15, 20M32
\end{abstract}

\normalsize

\section{Introduction}\label{S:1}

We start with a basic but important example for our purposes. 
Let $n$ be a positive integer, and 
let $\mbf{Fl}_n$ denote the variety of full flags in $\C^n$,
$$
\mbf{Fl}_n:= \{ F_\bullet\ :\ 0=V_0 \subset V_1 \subset V_2 \subset \cdots \subset V_n 
= \C^n,\ \dim_\C V_i = i\ \text{ for } i\in \{1,\dots, n\} \}.
$$
The special linear group $\mbf{SL}_n$ (with entries from $\C$) 
acts on $\mbf{Fl}_n$ by its defining representation.
Let $\mbf{L}_{p,q}$ denote the subgroup which consists of 
block matrices of the form 
$\begin{bmatrix} g_{11} & 0  \\ 0 & g_{22} \end{bmatrix}$
with $g_{11}\in \mbf{GL}_p$, $g_{22}\in \mbf{GL}_q$, and $\det g_{11} \det g_{22} = 1$.
Recently, there has been much progress in the study of the induced action 
\begin{align}\label{A:action}
\mbf{L}_{p,q} \times \mbf{Fl}_n \longrightarrow  \mbf{Fl}_n.
\end{align} 
For example, the generating series of the number of $\mbf{L}_{p,q}$-orbits is found; 
a bijection between the set of $\mbf{L}_{p,q}$-orbits and a certain set of labeled Delannoy paths
is constructed~\cite{CanUgurlu1}, and the maximal chains in the weak order are studied,
~\cite{CanJoyceWyser}; the covering relations of the Bruhat order on the set of 
$\mbf{L}_{p,q}$-orbit closures are found,~\cite{Wyser16}; 
Schubert-like polynomials corresponding to the cohomology classes of $\mbf{L}_{p,q}$-orbit
closures are described,~\cite{WyserYong}; singularities of 
$\mbf{L}_{p,q}$-orbit closures are analyzed,~\cite{McGovern,WooWyserYong, WooWyser}.

In this paper, by using some combinatorial 
tools that we developed in the first cited reference,
we describe a relationship between 
the cohomology classes corresponding to the $\mbf{L}_{p,q}$-orbit closures in 
$\mbf{Fl}_n$ and the cohomology classes of a Grassmann variety by using lattice paths in the plane. 
In fact, we achieve this connection by proving some general results on the generators 
of the cohomology ring of the quotient by a Levi subgroup of a complex reductive algebraic group. 
Before stating our main results let us briefly mention what is easy to 
see about the Chow rings of quotients by Levi subgroups.

Let $G$ be a reductive complex algebraic group. 
We fix a maximal torus $T$ and a Borel subgroup $B$ containing $T$ in $G$. 
Let $P$ be a parabolic subgroup in $G$ with unipotent radical $R_u(P)$. Let $L$ be 
a Levi subgroup of $P$ so that $P= L\ltimes R_u(P)$. We assume that $B\subseteq P$
and that $T\subseteq L$. Note that $R_u(P)$ is a unipotent subgroup of $P$ and 
it acts simply transitively on $P/L$.
Therefore, the fibration  
\begin{align}\label{A:fibration1}
P/L \rightarrow G/L \rightarrow G/P
\end{align}
is a fibre bundle with group $P/L_0$, 
where $L_0$ is the largest subgroup of $L$ 
which is normal in $P$,~\cite[Section 7.2]{Steenrod}.
In other words, $G/L$ is an affine bundle over the flag variety $G/P$. 
Since (\ref{A:fibration1}) is an affine bundle on $G/P$, the pullback map,
$A_i(G/P) \rightarrow A_{n+i}(G/L)$, between the $i$-th Chow groups is 
an isomorphism, see~\cite[Lemma 2.2]{Totaro}.
As a consequence, by using the groups of cycles graded by codimension, 
we see that the Chow rings $A^*(G/L)$ and $A^*(G/P)$ are isomorphic. 

On the other hand, by a result of Brion,
see~\cite[Corollary 12]{Brion98}, we know that 
there is an isomorphism between the rational cohomology rings 
and rational Chow rings,
\begin{align}\label{A:beyond}
A^*(G/L)_\Q \cong H^*(G/L,\Q) \ \text{ and } \ A^*(G/P)_\Q \cong H^*(G/P,\Q).
\end{align}
Of course, the second isomorphism is known to be true not only with rational coefficients 
but also with integer coefficients. To state our 
first main result, we need some more notation.
Let $T$ be a maximal torus of $G$ such that $T\subseteq L$. Let $W$ denote the 
Weyl group of $(G,T)$, and let $W_L$ denote the Weyl group of $(L,T)$.  
We let $S$ denote
symmetric algebra over the integers of the character group of $T$.
There is a canonical injection between the ring of invariants $S^W \hookrightarrow S^{W_L}$. 
Let $(S_+)^{W}$ denote the ideal of $S^{W_L}$ generated by the images of 
the homogeneous elements of positive degree from $S^W$. 
\vspace{.2cm}

\begin{Theorem}\label{T:Intro1}
In the notation as above, we have the isomorphisms 
$$
A^*(G/L)\cong H^*(G/L,\Z) \cong S^{W_L}/(S_+)^W.
$$
\end{Theorem}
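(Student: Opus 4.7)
The plan is to reduce the theorem to the flag variety $G/P$, where the Chow ring and the integral cohomology are classically understood, and then transport the results to $G/L$ along the affine bundle $\pi : G/L \to G/P$ of (\ref{A:fibration1}). The key observation is that naturality of the cycle class map yields a commutative square
\begin{equation*}
\begin{CD}
A^*(G/P) @>\pi^*>> A^*(G/L) \\
@V\mt{cl}VV @V\mt{cl}VV \\
H^*(G/P,\Z) @>\pi^*>> H^*(G/L,\Z)
\end{CD}
\end{equation*}
in which I plan to show that the top, bottom, and left arrows are isomorphisms; the right vertical cycle class map is then automatically an isomorphism, giving the first half $A^*(G/L) \cong H^*(G/L,\Z)$.

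The top arrow is Totaro's lemma for the affine bundle (\ref{A:fibration1}), already invoked in the excerpt. The left arrow is an isomorphism because $G/P$ admits a Bruhat decomposition by complex affine Schubert cells, so the Schubert classes form a common $\Z$-basis of $A^*(G/P)$ and $H^*(G/P,\Z)$. The bottom arrow is an isomorphism because $\pi$ is a fiber bundle with contractible fiber $P/L$ (isomorphic to $R_u(P)\cong \A^{\dim R_u(P)}$), hence a weak homotopy equivalence. With the first isomorphism in hand, the second follows by applying Borel's presentation $H^*(G/P,\Z)\cong S^{W_L}/(S_+)^W$ and transporting it across $\pi^*$.

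The step I expect to be the main obstacle is justifying the integral form of Borel's presentation $H^*(G/P,\Z)\cong S^{W_L}/(S_+)^W$. Over $\Q$ this is immediate from (\ref{A:beyond}), but integrally one has to be careful about potential torsion. The route I would follow is to first establish $H^*(G/B,\Z)\cong S/(S_+^W)$ (whose integral version is in the literature for the Weyl groups arising here, using the torsion-freeness of $H^*(G/B,\Z)$ witnessed by the Schubert basis together with a Poincar\'e polynomial comparison), and then identify $H^*(G/P,\Z)$ with $H^*(G/B,\Z)^{W_L}$ via the flag bundle $G/B\to G/P$ of fiber $P/B$. A final check that the freeness of $S$ over $S^W$ respects the $W_L$-action lets one conclude $S^{W_L}/(S_+)^W\cong (S/(S_+^W))^{W_L}$, closing the argument.
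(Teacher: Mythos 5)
Your argument is correct, but it takes a genuinely different route from the paper's. You avoid any new geometric input on $G/L$ and instead run a ``three out of four'' argument on the commutative square of cycle class maps: the horizontal arrows $\pi_{L,P}^*$ are isomorphisms (on Chow groups by Totaro's affine-bundle lemma, on singular cohomology because the fiber $P/L\cong R_u(P)$ is contractible so $\pi_{L,P}$ is a weak homotopy equivalence), the left vertical map is an isomorphism because $G/P$ has a Schubert cell decomposition, and the right vertical map is then forced to be an isomorphism by commutativity. The paper proceeds quite differently: Theorem~\ref{T:main1} exhibits an explicit cellular decomposition of $G/L$ itself --- the pieces $\pi_{L,P}^{-1}(B\cdot yP)\cong R_u(P)\times B\cdot yP$, $y\in W^P$, are affine spaces --- and the isomorphism $A^*(G/L)\cong H^*(G/L,\Z)$ then follows from the general theory of cellular (linear) varieties, with the closures of these cells furnishing explicit $B$-invariant generators. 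Your route is cleaner if the only goal is Theorem~\ref{T:Intro1}, but the paper's construction buys strictly more: it shows $G/L$ is linear in Jannsen's sense, it identifies which $B$-invariant cycles generate $A_*(G/L)$, and it is precisely this decomposition that drives the sect combinatorics and Theorem~\ref{T:main2} in the remainder of the paper. Both arguments rely on the same two external inputs --- the affine bundle structure of $\pi_{L,P}$ and the integral Borel presentation $H^*(G/P,\Z)\cong S^{W_L}/(S_+)^W$ --- and you are right to flag the integral presentation as the delicate point; the paper simply asserts it, so you are not at a disadvantage there, though for general reductive $G$ (beyond $\mathbf{SL}_n$) the surjectivity of the characteristic map $S\to H^*(G/B,\Z)$ does require the kind of care you sketch.
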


The above theorem shows that the integral Chow ring of $G/L$ is torsion free. 
This gives a partial answer to a question of Brion about the integral Chow rings
of homogeneous spaces $G/H$, see~\cite[Question 1]{Brion98}.
We obtain a proof of Theorem~\ref{T:Intro1} by describing an explicit cell decomposition of $G/L$; 
the Poincar\'e duals of the closures of these cells generate the cohomology ring. 
In particular, this shows that $G/L$ is a linear variety in the sense of 
Jannsen~\cite[Section 14]{Jannsen}.

As we hinted before, the main example we have in mind for the application of our theorem
is the homogeneous space $\mbf{E}_{p,q} := \mbf{SL}_n/ \mbf{L}_{p,q}$,
which we call {\em the variety of (zero) polarizations}.
Let $S_n$ denote the symmetric group of permutations on $\{1,\dots, n\}$,
and let $S_p\times S_q$ denote its Young subgroup consisting of 
permutations $\sigma\in S_n$ such that $\sigma(\{1,\dots, p\})= \{1,\dots, p\}$. 
Then the integral Chow ring of $\mbf{E}_{p,q}$ is isomorphic to
\begin{align}\label{A:Coinvariants}
A^*(\mbf{E}_{p,q}) \cong (\Z[ x_1,\dots, x_n ] / I_+)^{S_p\times S_q},
\end{align}
where $I_+$ is the ideal generated by the elementary symmetric polynomials $e_i(x_1,\dots, x_n)$
$(1\leq i \leq n)$.
The right hand side of (\ref{A:Coinvariants}) is also the Chow ring of 
the Grassmannian $\mbf{Gr}(p,n) = \mbf{SL}_n/\mbf{P}_{p,q}$.
Here, $\mbf{P}_{p,q}$ is the parabolic subgroup containing $\mbf{L}_{p,q}$
as a Levi factor. 
It is well known that the closures of $\mbf{B}_n$-orbits in $\mbf{Gr}(p,n)$
give a $\Z$-basis for the Chow ring, where $\mbf{B}_n$ is the 
Borel subgroup consisting of 
invertible upper triangular matrices in $\mbf{SL}_n$. 
More generally, if a connected solvable group $B$ 
acts on a complex scheme $X$, then the ($B$-equivariant) Chow groups of $X$ 
are generated by the $B$-invariant cycles, see~\cite{FMSS},
so, we look more carefully at the $\mbf{B}_n$-invariant cycles in $\mbf{E}_{p,q}$. 

In the second part of our paper we identify the dense and closed 
$\mbf{B}_n$-orbits in the $\mbf{B}_n$-invariant cycles 
which form a $\Z$-basis for $A_*(\mbf{E}_{p,q})$. 
This analysis led us to the discovery that 
the combinatorics of the Borel subgroup orbits in $\mbf{E}_{p,q}$ is closely 
related to the Borel subgroup orbits in an algebraic monoid. 
To explain, let us denote by $\mbf{DB}_n$ the {\em doubled-Borel 
subgroup} $\mbf{B}_n^e\times \mbf{B}_n^e$, where 
$\mbf{B}_n^e$ is the Borel subgroup of all invertible upper triangular 
matrices in $\mbf{GL}_n$. Clearly, $\mbf{B}_n^e$ is generated 
by $\mbf{B}_n$ and the center of $\mbf{GL}_n$. 
Let $\mbf{Mat}_n$ denote the monoid of $n\times n$ 
matrices with entries from $\C$. 
The doubled-Borel subgroup $\mbf{DB}_n$ acts on $\mbf{Mat}_n$ by 
$(a,b)\cdot g \mapsto a g b^{-1}$, $a,b\in \mbf{B}_n^e$, $g\in \mbf{Mat}_n$. 
The orbits of this action are parametrized by the rook monoid, denoted by $R_n$, 
which consists of $n\x n$, 0/1-matrices with at most one 1 in each row and each column, 
see~\cite{Renner86}. 
The rook monoid is partially ordered with respect to {\em Bruhat-Chevalley-Renner order} 
defined by 
\begin{align}\label{A:BCR}
\sigma \leq \tau \iff \mbf{DB}_n\cdot \sigma \subseteq 
\overline{\mbf{DB}_n\cdot \tau } \qquad (\sigma,\tau \in R_n).
\end{align}

The Borel orbits in $\mbf{E}_{p,q}$ are parametrized by the combinatorial objects called
`signed $(p,q)$-involutions.' By definition, a {\em signed $(p,q)$-involution} is an involution 
$\sigma$ in $S_n$ whose fixed points are labeled either by + or by - in such a way that 
the number of +'s is $p-q$ more than the number of -'s. Here, without loss of generality, 
we assume that $p\geq q$.
The data of a signed $(p,q)$-involution can equivalently be given by another combinatorial 
object called a $(p,q)$-clan. We will describe this object in more detail in the sequel but let
us mention that the equivalence of these two notions are explained in detail in~\cite{CanUgurlu2}.  
By the work of Yamamoto~\cite{Yamamoto} and Wyser~\cite{WyserThesis},
it is known that the Borel orbits in $\mbf{E}_{p,q}$ are parametrized by the $(p,q)$-clans.
By using our `cellular decomposition', we partition the set of $(p,q)$-clans into subsets
that we call {\em sects}. These equivalence classes 
have geometric significance that we explain here.
There is a canonical projection map
from the symmetric space $\mbf{E}_{p,q}$ onto the Grassmannian, 
$\pi_{\mbf{L}_{p,q},\mbf{P}_{p,q}}: \mbf{E}_{p,q} \longrightarrow \mbf{Gr}(p,n)$.
A cellular decomposition of $\mbf{Gr}(p,n)$ is given by the orbits of $\mbf{B}_n$, 
which are called the {\em Schubert cells}.  
A Schubert cell is uniquely determined by a lattice path in a $p\x q$-grid.
We show that two Borel orbits in $\mbf{E}_{p,q}$ corresponding to 
$(p,q)$-clans $\gamma$ and $\tau$ project down onto
the same Schubert cell under $\pi_{\mbf{L}_{p,q},\mbf{P}_{p,q}}$ 
if and only if $\gamma$ and $\tau$ belong to the same sect. 
In other words, we show that the sects are parametrized by the 
lattice paths in a $p\x q$-grid. 

It turns out that each sect is a graded poset with a unique maximal and 
a unique minimal element. The poset structure is induced from the 
inclusion order on the Borel orbit closures in $\mbf{E}_{p,q}$. 
Let us denote by $\mt{Dense}(p,q)$ 
the sect containing the clan corresponding to the dense Borel orbit in $\mbf{E}_{p,q}$. 
We will call $\mt{Dense}(p,q)$ the {\em big sect}.
An upper order ideal in a poset $\mc{P}$ is a subset $\mc{I}\subseteq \mc{P}$
such that if $x$ and $y$ are two elements from $\mc{P}$ with 
$x\in \mc{I}$ and $x\leq y$, then $y\in \mc{I}$. 
Now we can state our second main result.

\begin{Theorem}\label{T:main2}
Let $p$ and $q$ be two positive integers. Then 
the big sect $\mt{Dense}(p,q)$ is a maximal upper order ideal in the 
Bruhat poset of $(p,q)$-clans. 
Furthermore, if $p=q$, then there exists a poset isomorphism between 
$\mt{Dense}(p,q)$ and the rook monoid $R_p$. 
\end{Theorem}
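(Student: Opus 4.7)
The plan is to work through the $\mbf{B}_n$-equivariant projection $\pi:=\pi_{\mbf{L}_{p,q},\mbf{P}_{p,q}}\colon \mbf{E}_{p,q}\to \mbf{Gr}(p,n)$. By the characterization of sects given in the introduction, $\mt{Dense}(p,q)$ consists of precisely those clans whose associated $\mbf{B}_n$-orbit is contained in $U:=\pi^{-1}(C_0)$, where $C_0\subset \mbf{Gr}(p,n)$ is the unique open (hence dense) Schubert cell. Because $U$ is open and $\mbf{B}_n$-stable, the upper order ideal property is immediate: if $\gamma\in \mt{Dense}(p,q)$ and $\gamma\leq \tau$ in the Bruhat order, then $\mbf{B}_n\cdot\gamma\subseteq \overline{\mbf{B}_n\cdot \tau}\cap U$, so $\mbf{B}_n\cdot \tau$, being open in its closure, meets $U$ and hence lies in $U$ by $\mbf{B}_n$-invariance, whence $\tau\in \mt{Dense}(p,q)$. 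For maximality, the plan is to exploit the fact that every sect other than $\mt{Dense}(p,q)$ projects to a non-dense Schubert cell of $\mbf{Gr}(p,n)$; since the dense cell strictly dominates every other cell in the Schubert Bruhat order, each clan outside $\mt{Dense}(p,q)$ is strictly Bruhat-dominated by some clan inside $\mt{Dense}(p,q)$, forbidding any proper upper order ideal of the clan poset that strictly enlarges $\mt{Dense}(p,q)$ without also absorbing the non-big sects completely.

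For the isomorphism $\mt{Dense}(p,p)\cong R_p$ in the case $p=q$, the plan is to realize the $\mbf{B}_n$-orbits on $U$ as doubled-Borel orbits on a matrix space. The restriction $\pi|_U\colon U\to C_0$ is a homogeneous $\mbf{B}_n$-bundle, so its $\mbf{B}_n$-orbits correspond bijectively to the $\mt{stab}_{\mbf{B}_n}(x_0)$-orbits on the fiber $\pi^{-1}(x_0)$ for any basepoint $x_0\in C_0$. Taking $x_0:=w_0\mbf{P}_{p,p}$, the stabilizer equals $\mbf{B}_n\cap \mbf{P}_{p,p}^{\mt{op}}=\mbf{B}_p\times \mbf{B}_p$, which is a Borel subgroup of $\mbf{L}_{p,p}$, while the fiber is canonically identified with the unipotent radical $\mbf{U}_{\mbf{P}_{p,p}}\cong \mbf{Mat}_p$. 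A direct calculation then identifies the induced action with the doubled-Borel action $(B_1,B_2)\cdot X=B_1 X B_2^{-1}$, whose orbits are indexed by the rook monoid $R_p$ via Renner's classification. This supplies a bijection $\phi\colon \mt{Dense}(p,p)\to R_p$. Since Bruhat comparability of clans within $\mt{Dense}(p,p)$ is equivalent to $\mbf{B}_n$-orbit closure containment inside the open set $U$ (both orbits already lie in $U$), under the trivialization this translates to closure containment of doubled-Borel orbits in $\mbf{Mat}_p$, which is precisely the Bruhat-Chevalley-Renner order (\ref{A:BCR}); hence $\phi$ is a poset isomorphism.

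The principal technical hurdle will be making the maximality assertion rigorous. The projection argument yields the upper order ideal property cleanly and singles out $\mt{Dense}(p,q)$ as the unique sect that is upward closed, but carefully demonstrating that no other upper order ideal of the clan poset sits properly between $\mt{Dense}(p,q)$ and the full set of clans in whatever sense is intended demands an analysis of how the minimum elements of the remaining sects fit into the global Bruhat order, for which Wyser's explicit combinatorial description of Bruhat covers should be decisive.
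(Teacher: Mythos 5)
Your argument splits into the same two pieces as the paper's, but the routes are materially different in the second piece and incomplete in one spot. For the upper order ideal property, your open-set argument (if $\gamma\in\mt{Dense}(p,q)$ and $\gamma\leq\tau$, then $\overline{R_\tau}$ meets the open $\mbf{B}_n$-stable set $U=\pi^{-1}(C_{I_0})$, so $R_\tau$ itself meets $U$ and hence lies inside it) is a clean reformulation of the paper's projection argument: the paper instead assumes $\tau\geq\gamma$ with $\tau$ in a smaller sect, applies $\mbf{B}_n$-equivariance of $\pi_{\mbf{L}_{p,q},\mbf{P}_{p,q}}$, and derives a contradiction because $\overline{\pi(R_\tau)}$ cannot contain the dense Schubert cell. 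Both are correct; yours makes the topological mechanism slightly more transparent.

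For the $p=q$ case, your proof of $\mt{Dense}(p,p)\cong R_p$ is genuinely different from the paper's and is, in my view, the more conceptual of the two. The paper works combinatorially: it defines $x_\gamma$ as the lower-left $p\times p$ block of the underlying involution matrix, and then uses Wyser's inequalities together with the rank-control description of Bruhat order on partial involutions (Bagno--Cherniavsky, Miller--Sturmfels) to check that $\gamma\mapsto x_\gamma$ is an order isomorphism. You instead observe that $\pi|_U\colon U\to C_{I_0}$ is a $\mbf{B}_n$-homogeneous fibration, identify the stabilizer of a point in the dense cell with a Borel of $\mbf{L}_{p,p}$ (up to conjugacy $\mbf{B}_p\times\mbf{B}_p$), identify the fiber with $\mbf{Mat}_p\cong R_u(\mbf{P}_{p,p})$, and recognize the induced action as the doubled-Borel action $X\mapsto b_1Xb_2^{-1}$, whence Renner's parametrization and the Bruhat--Chevalley--Renner closure order drop out. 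This route \emph{explains} why $R_p$ appears (it is precisely the Borel orbit structure on the typical fiber over the big cell), and it dovetails nicely with Proposition~\ref{C:minmax1} and Remark~\ref{R:Bactiononradical} already in the paper. The only details you should actually supply to make it airtight are (i) the computation showing that the stabilizer action on $R_u(\mbf{P}_{p,p})$ really is two-sided Borel multiplication, and (ii) that the resulting closure order matches the one defined at (\ref{A:BCR}) up to the automorphism of $R_p$ induced by opposite Borels; both are routine but should be written out.

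The genuine gap in your proposal is the maximality claim, which you flag yourself but do not resolve. The paper's own treatment of maximality is a single sentence in Proposition~\ref{P:orderideal} ($\mt{Dense}(p,q)$ contains a minimal element and the unique maximal element), and one should be careful about exactly what notion of ``maximal upper order ideal'' is intended; as stated, ``$\mt{Dense}(p,q)\cup\{\tau\}$ is still an upper order ideal whenever $\tau\notin\mt{Dense}(p,q)$ and all covers of $\tau$ already lie in $\mt{Dense}(p,q)$'' is a real possibility (see the purple clan $1+\!-\!1$ in Figure~\ref{fig:p2q2}), so the honest reading is that $\mt{Dense}(p,q)$ is maximal among upper order ideals that are unions of sects, equivalently, maximal among $\mbf{B}_n$-stable unions of fibers. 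Your plan to chase minimum elements of the remaining sects through Wyser's covering relations would resolve it but is not carried out; if you pursue it, make the intended notion of maximality precise first.
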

In Theorem~\ref{T:main2}, the partial order on $R_p$ is the one that is defined at (\ref{A:BCR}).
It is not hard to guess how the second part of this theorem should generalize for not necessarily
equal integers $p$ and $q$, 
and it is not hard to guess how it could be related to algebraic monoids of other types.
We leave these investigations to a future paper.


Now we are ready to give a brief description of our paper. 
In Section~\ref{S:2}, we setup our notation and explain the basic fiber bundle 
structure $\mbf{E}_{p,q} \rightarrow \mbf{Gr}(p,n)$. 
In the following Section~\ref{S:3}, 
we prove our main first main theorem. 
We start Section~\ref{S:4} by 
reviewing Yamamoto's parametrization~\cite{Yamamoto} of the Borel 
group orbits in the symmetric space $\mbf{E}_{p,q}$, and we review 
various interpretations of clans. 
The purpose of Section~\ref{S:5} is to review the work of Wyser on 
Bruhat order on clans. In the final Section~\ref{S:6} we prove 
our second main result.

\vspace{1cm}

\textbf{Acknowledgements.} The second author is partially supported
by a grant from Louisiana Board of Regents.

\section{Preliminaries}\label{S:2}

The purpose of this section is to setup some notation 
and to present some basic results pertaining to the homogeneous spaces
that we are concerned about. 

Throughout our paper, all varieties and groups are defined over $\C$.
Unless otherwise stated,  
$G$ will always stand for a reductive algebraic group, and 
$B$ will always stand for a Borel subgroup in $G$. 
A closed subgroup of $G$ is called parabolic subgroup if it contains a Borel subgroup. 
A parabolic subgroup $P$ is called standard (with respect to $B$) 
if the inclusion $B\subseteq P$ holds.
We use the notation $R_u( H )$ to denote the unipotent radical of 
an algebraic group $H$. 
A subgroup $L$ in $G$ is called a Levi subgroup 
if for some parabolic subgroup $P$ the following 
decomposition holds: $P= L \ltimes R_u(P)$.

\subsection{The fiber bundle structure on the variety of polarizations.}\label{SS:1}

Let $\mbf{B}_n$ denote as before the Borel subgroup of upper triangular 
matrices in $\mbf{SL}_n$. 
For two positive integers $p$ and $q$ such that $n=p+q$, we denote
by $\mbf{Mat}_{p,q}$ the affine space of $p\times q$ matrices with entries
from $\C$. 
The parabolic subgroup that is defined by 
$$
\mbf{P}_{p,q} := \left\{
\begin{bmatrix}
x & h \\
0 & y
\end{bmatrix} :\ (x,y)\in \mbf{GL}_p\times \mbf{GL}_q,\ \det x\det y =1,\ h\in \mbf{Mat}_{p,q} \right\}
$$
contains the subgroup 
$$
\mbf{L}_{p,q} := \left\{
\begin{bmatrix}
x & 0 \\
0 & y
\end{bmatrix} :\ (x,y)\in \mbf{GL}_p\times \mbf{GL}_q,\ \det x\det y =1 \right\}
$$
as a Levi subgroup.

We identify $\mbf{SL}_n/\mbf{B}_n$ with the flag variety $\mbf{Fl}_n$, 
and we denote the set of orbits of $\mbf{L}_{p,q}$ in $\mbf{Fl}_n$ 
by $\mbf{L}_{p,q}\backslash \mbf{SL}_n/\mbf{B}_n$. Thus, the elements of 
the quotient space $\mbf{L}_{p,q} \backslash \mbf{Fl}_n$ 
are in canonical bijection with the $(\mbf{L}_{p,q},\mbf{B}_n)$ double cosets in $\mbf{SL}_n$. 
It is well known that $\mbf{L}_{p,q}$ is a `spherical subgroup', that is to say,  
$\mbf{L}_{p,q}\backslash \mbf{SL}_n/\mbf{B}_n$ is a finite set.

As we promised in the introduction, 
we will now compute $\mbf{L}_0:=\bigcap_{g\in \mbf{P}_{p,q}} g \mbf{L}_{p,q} g^{-1}$. 
First of all, it is easy to observe that  
\begin{align}\label{A:easy}
\mbf{L}_0 = \bigcap_{g\in R_u(\mbf{P}_{p,q})} g \mbf{L}_{p,q} g^{-1}.
\end{align}
Indeed, (\ref{A:easy}) is a simple consequence of the fact that 
$\mbf{P}_{p,q}= R_u(\mbf{P}_{p,q}) \rtimes \mbf{L}_{p,q}$. 
Now, let $g$ be an element from $R_u(\mbf{P}_{p,q})$ so it has the form 
$$
g= 
\begin{bmatrix}
id_p & h \\
0 & id_q
\end{bmatrix} 
$$
where $id_p$ and $id_q$ are identity matrices of sizes $p$ and $q$, respectively, 
and $h\in \mbf{Mat}_{p,q}$. 
Let $a$ denote 
$$
a:= 
\begin{bmatrix}
x & 0 \\
0 & y
\end{bmatrix} \in \mbf{L}_{p,q}
$$
Then 
$$
g a g^{-1}= 
\begin{bmatrix}
id_p & h \\
0 & id_q
\end{bmatrix} 
\begin{bmatrix}
x & 0 \\
0 & y
\end{bmatrix}
\begin{bmatrix}
id_p & -h \\
0 & id_q
\end{bmatrix} 
=
\begin{bmatrix}
x & -xh + h y \\
0 & y
\end{bmatrix}
$$
Such a matrix is contained in $\mbf{L}_{p,q}$ if and only if 
$-xh + h y=0$ for all $h\in \mbf{Mat}_{p,q}$,
which holds true if and only if 
$x=c\, id_p$ and $y=c\, id_q$ where $c\in \C^*$ so that 
$c\, id_n$ is contained in the center of $\mbf{SL}_n$.
Therefore, $\mbf{L}_0$ is 
equal to the center of $\mbf{SL}_n$, which is the finite cyclic 
group of order $n$, denoted by $\mu_n$. 
Therefore, the original fibre bundle 
$\mbf{SL}_n/\mbf{L}_{p,q}\rightarrow \mbf{SL}_n/\mbf{P}_{p,q}$
is associated to the 
principal fibre bundle $\mbf{PSL}_n \rightarrow \mbf{SL}_n/\mbf{P}_{p,q}$ with 
group $\mbf{P}_{p,q}/\mu_n$ in the sense of~\cite{Steenrod}.

\subsection{The Bruhat-Chevalley decomposition.}

We will loosely follow the notation from Borel's book~\cite{Borel}.

Let $T$ be a maximal torus of $G$ contained in $B$. 
Then $B = T \ltimes U$, where 
$U$ is the unipotent radical of $B$. 
We denote the Weyl group of $(G,T)$ by the letter $W$.
Then $W= N_G(T)/T$, where $N_G(T)$ is the normalizer of $T$ in $G$. 
We will allow ourselves to confuse the elements of $W$ with their
representing elements in $N_G(T)$.

The Borel subgroup that is opposite to $B$ is denoted by $B^-$. 
Note that $B\cap B^- = T$ is the common Levi subgroup of $B$ and $B^-$. 
The unipotent radical of $B^-$ is denoted by $U^-$.

{\em Notation: 
If $g$ is an element from $G$ and $H$ is a subgroup of $G$,
then we denote by $\conj{g}{H}$ the subgroup  
$\conj{g}{H}:= \{ g h g^{-1}: h\in H\}$.}

Now let $w\in W$ and consider the following two subgroups of $U$:
\begin{align}\label{A:two subgroups}
U_w := U\cap \conj{w}{U} \qquad \text{ and } \qquad U_w' := U \cap \conj{w}{(U^-)}.
\end{align}
Also, we need some root theoretic notation; let $\varPhi$ denote 
the root system associated with $(G,T)$ and 
let $\varPhi^+$ denote the set of positive roots determined by $B$.
For $\alpha\in \varPhi^+$, the notation $U_\alpha$ 
stands for the 1-dimensional $T$-stable unipotent subgroup of $U$ so that we have
\begin{align}
\mt{Lie}(U) = \bigoplus_{\alpha \in \varPhi^+} \mt{Lie}(U_\alpha).
\end{align}
\\

{\em Caution: $U_\alpha$ should not to be confused with $U_w$ of (\ref{A:two subgroups}).} 
\\

The two subgroups $U_w$ and $U_w'$ in (\ref{A:two subgroups}) of $U$ are 
$T$-stable, therefore, they are {\it directly spanned} by the root subgroups 
$U_\alpha$ ($\alpha >0$) that they contain. 
The sets of such $\alpha$'s are given, respectively, by 
\begin{align}
\varPhi^+ (U_w) &= \{ \alpha > 0 :\ \alpha^w >0\} \\
\varPhi^+ (U_w') &= \{ \alpha > 0 :\ \alpha^w < 0\} 
\end{align}
where $\alpha^w$ is the root $\alpha \circ Int(n)$ for any $n\in N_G(T)$
representing $w$. 
\begin{Remark}
For two elements $u,v\in W$, we have
$\varPhi^+ (U_u) = \varPhi^+ (U_v)$ if and only if $u=v$.
\end{Remark}
Since $\varPhi^+(U_w)$
and $\varPhi^+(U_w')$ partition $\varPhi^+$, we know that
$$
U = U_w \cdot U_w'= U_w' \cdot U_w.
$$

\begin{Theorem}[Bruhat-Chevalley decomposition of $G$]
$G$ is the disjoint union of the double cosets $BwB$
$(w\in W)$. 
If $w\in W$ then the morphism 
\begin{align*}
U_w' \times B &\longrightarrow BwB \\
(x,y) &\longmapsto xwy
\end{align*}
is an isomorphism of varieties.
In particular, $Uw B = BwB$.
\end{Theorem}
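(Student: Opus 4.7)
The plan is to split the statement into three sub-claims and prove them in sequence: (i) exhaustion, $G = \bigcup_{w\in W} BwB$; (ii) disjointness of the double cosets for distinct Weyl group elements; (iii) the explicit parametrization $U_w' \times B \xrightarrow{\sim} BwB$. The ``in particular'' assertion $UwB = BwB$ is immediate from $B = TU$ and $Tw = wT$, since $TwB = wTB = wB$, so once (iii) is handled the last sentence is free.

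For (i), I would proceed by an induction on the length of a reduced expression, phrased as a closure property. The starting observation is that the subset $\Omega := \bigcup_{w\in W} BwB$ contains $B = BeB$ and is stable under right multiplication by $B$. Using the simple reflections $s_\alpha$ associated with the simple roots, the key computation $U_\alpha \cdot s_\alpha \subseteq B s_\alpha B \cup B$ (done inside the rank-one subgroup generated by $U_\alpha$ and $U_{-\alpha}$) yields the inclusion $B s_\alpha B \cdot B w B \subseteq B w B \cup B s_\alpha w B$. Since the simple reflections generate $W$, this shows $\Omega$ is closed under left multiplication by each $B s_\alpha B$, hence by the subgroup they generate together with $B$, which is all of $G$ by the fact that the $U_{\pm\alpha}$ and $T$ generate $G$.

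For (ii), I would use the $T$-fixed points on $G/B$: the points $wB \in G/B$ for $w\in W$ are pairwise distinct (the isotropy of $wB$ under $T$ coincides with all of $T$, and the $T$-fixed locus is finite, indexed by $N_G(T)/T = W$). If $BwB = Bw'B$, then $w'B$ and $wB$ lie in the same $B$-orbit on $G/B$; intersecting that orbit with the $T$-fixed locus and invoking the Bia\l ynicki-Birula-type rigidity — or, more elementarily, comparing the root subgroups appearing in the direct span — forces $w = w'$. For (iii), the algebra is: using $B = TU$ and $Tw = wT$, $BwB = UwB$; decomposing $U = U_w' U_w$ (a direct product as varieties because of the root subgroup factorization) and observing that $w^{-1} U_w w \subseteq U \subseteq B$ collapses $U_w w B$ to $wB$, giving $BwB = U_w' \cdot w \cdot B$. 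For injectivity of $(x,y) \mapsto xwy$, an equality $x_1 w y_1 = x_2 w y_2$ yields $w^{-1}(x_2^{-1}x_1)w = y_2 y_1^{-1}$, and the left-hand side lies in $w^{-1} U_w' w \subseteq U^-$ while the right lies in $B$; since $B \cap U^- = \{e\}$ both sides are trivial. The map is a morphism with a regular inverse (reconstruct $y$ from the $B$-component after sliding $x \in U_w'$ to the left, all coordinate-wise polynomial in the root subgroup parameters), so it is an isomorphism of varieties.

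The main obstacle I anticipate is the exhaustion step (i); concretely, the rank-one calculation that $U_\alpha s_\alpha U_\alpha \subseteq B s_\alpha B \cup B$, which underlies the closure property, must be done carefully inside the $\mathbf{SL}_2$ or $\mathbf{PGL}_2$ generated by $U_{\pm\alpha}$, and then one needs the generation statement for $G$ by $T$ together with the root subgroups $U_{\pm\alpha}$ for $\alpha$ simple. Everything else reduces to the root-subgroup factorization $U = U_w' U_w$ and the triviality of $B \cap U^-$, both of which are formal consequences of the setup already introduced around equation~(\ref{A:two subgroups}).
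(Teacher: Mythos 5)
The paper does not actually prove this statement; it simply cites \cite[Theorem 14.12]{Borel}, so there is no argument in the text to compare against. Your sketch supplies a self-contained proof along what is essentially the standard route: exhaustion via the closure property $Bs_\alpha B \cdot BwB \subseteq BwB \cup Bs_\alpha wB$ coming from the rank-one computation, disjointness via the $T$-fixed locus of $G/B$, and the explicit parametrization from the factorization $U = U_w' U_w$ with $w^{-1}U_w w \subseteq U$ and the triviality of $B \cap U^-$. This matches, in outline, the proof in Borel's book (which the paper defers to), so the mathematical content is the right one even though the paper itself contributes no argument.

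One sequencing point worth tightening. Your disjointness step (ii) rests on the assertion that each $B$-orbit $BwB/B$ contains a unique $T$-fixed point. That fact is most cleanly extracted from the isomorphism $BwB/B \cong U_w'$ of part (iii): under it the $T$-action by left translation becomes the twisted conjugation action on $U_w'$, which is directly spanned by root subgroups $U_\alpha$ with nonzero character $\alpha$, so its only $T$-fixed point is the identity, giving $wB$ as the unique $T$-fixed point of the orbit. As you have written it, (ii) is stated before (iii), and the phrase invoking Bia\l ynicki-Birula rigidity is doing a lot of unstated work; either prove the surjectivity and injectivity of $U_w' \times B \to BwB$ first (neither of which uses disjointness) and then deduce (ii), or replace (ii) with the purely combinatorial Tits-system argument, an induction on $\ell(w)$ using the same closure property as in (i). As it stands, (ii) is the one place a reader could not reconstruct the proof from your text alone; the rest of the sketch, including the identity $UwB = BwB$ from $B = TU$ and $TwB = wB$, is complete and correct.
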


\begin{proof}
See the proof of \cite[Theorem 14.12]{Borel}.
\end{proof}

If $w\in W$, then we denote by $C(w)$ the $(B,B)$-double coset 
\begin{align}
C(w) : = B w B
\end{align}
in $G$.
The parabolic version of the Bruhat decomposition goes as follows.
Let $J$ be a subset of the set of simple roots $\varDelta$ 
that is determined by $\varPhi^+$ (hence by $B$). 
Let $P_J$ denote the standard parabolic subgroup that is 
determined by $J$. Thus, $P_J$ is generated by $B$ 
and the set of representatives in $N_G(T)$ of the 
simple reflections $s_\alpha$, where $\alpha \in J$.

Let $W_J$ denote the parabolic subgroup of $W$ 
generated by $s_\alpha$, where $\alpha \in J$.
Let $W^J$ denote the set of minimal length left-coset
representatives of $W_J$ in $W$. In this case, 
for each $w\in W$, there exist unique $w_J\in W_J$ 
and a unique $w^J \in W^J$ such that 
$$
w = w^J w_J.
$$
Finally, let us denote the double coset $BwB$ in $G$ by $C(w)$.

\begin{Theorem}\label{T:ParabolicBruhat}
We preserve the notation from the previous paragraph. 
In addition, let $\pi$ (resp. $\pi_J$) denote the canonical projections
$\pi: G\rightarrow G/B$ (resp. $\pi_J:G\rightarrow G/P_J$).
\begin{enumerate}
\item We have $Bw P_J = B w^J P_J$ and $\pi_J (C(w)) = \pi_J (C(w^J))$.
\item If $\sigma_J$ denotes the canonical surjection $\sigma_J: G/B \rightarrow G/P_J$,
then the induced map $\sigma_J : \pi(C(w)) \rightarrow \pi_J(C(w^J))$ is surjective. 
It is injective if and only if $w\in W^J$. In this case, $\sigma_J :  \pi(C(w)) \rightarrow \pi_J(C(w^J))$
is an isomorphism of varieties. 
\item The sets $\pi_J(C(w))$ ($w\in W^J$) form a partition of the variety $G/P_J$.
\end{enumerate}
\end{Theorem}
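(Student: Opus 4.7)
The plan is to establish the three parts of the theorem in order, leveraging the decomposition $w = w^J w_J$ throughout. For part (1), the key observation is that $w_J \in W_J \subseteq P_J$, so $w_J P_J = P_J$, whence $BwP_J = Bw^J w_J P_J = Bw^J P_J$. Passing to the quotient $G/P_J$ and using $B \subseteq P_J$ to absorb the right-hand $B$-factor of $BwB$, we obtain $\pi_J(C(w)) = \pi_J(BwP_J) = \pi_J(Bw^J P_J) = \pi_J(C(w^J))$.

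For part (2), surjectivity is immediate from (1). For the injectivity criterion, I would use the cellular description from the preceding theorem, namely $C(w) = U_w' w B$, which identifies $\pi(C(w))$ with $U_w'$ via $u \mapsto uwB$. Under $\sigma_J$, two points $u_1 w P_J$ and $u_2 w P_J$ coincide precisely when $u_1^{-1}u_2 \in U_w' \cap w P_J w^{-1}$, so the question reduces to computing this intersection at the level of root subgroups. Since $P_J$ is directly spanned by $T$ together with the root subgroups $U_\alpha$ for $\alpha \in \varPhi^+ \cup \varPhi_J^-$, where $\varPhi_J^+$ is the set of positive roots of the Levi subsystem determined by $J$, I expect to obtain
\[
U_w' \cap w P_J w^{-1} \;=\; \prod_{\alpha > 0,\ w^{-1}\alpha \in \varPhi_J^-} U_\alpha .
\]
Substituting $w = w^J w_J$ and invoking the defining property of $W^J$ (that $w^J$ carries $\varPhi_J^+$ into $\varPhi^+$), the indexing set is in bijection with $\{\delta \in \varPhi_J^+ : w_J\delta < 0\}$, which has cardinality $\ell(w_J)$ by the standard inversion-count formula. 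Hence the intersection is trivial iff $w_J = e$ iff $w \in W^J$, yielding injectivity in exactly this case. When $w \in W^J$, the same root computation identifies both $\pi(C(w))$ and $\pi_J(C(w))$ with the affine space $U_w'$, so the bijective morphism between them is an isomorphism of varieties.

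For part (3), the sets $\pi_J(C(w^J))$ for $w^J \in W^J$ cover $G/P_J$ by combining (1) with the Bruhat decomposition $G = \bigsqcup_{w \in W} C(w)$. For pairwise disjointness I would invoke the Bruhat decomposition of $P_J$, namely $P_J = \bigsqcup_{v \in W_J} BvB$, together with the length-additive product formula $C(w_2^J)\cdot C(v) = C(w_2^J v)$, valid because $\ell(w_2^J v) = \ell(w_2^J) + \ell(v)$ whenever $w_2^J \in W^J$ and $v \in W_J$. Then $g_1 P_J = g_2 P_J$ with $g_i \in C(w_i^J)$ forces $g_1 \in C(w_2^J v)$ for some $v \in W_J$, and the uniqueness of Bruhat cells gives $w_1^J = w_2^J v$; the minimality of $w_1^J$ in the coset $w_2^J W_J$ then forces $v = e$.

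The main obstacle is the root-theoretic computation of $U_w' \cap wP_J w^{-1}$ in part (2): carrying it out cleanly requires combining the decomposition $w = w^J w_J$ with two equivalent characterizations of $W^J$ (via simple roots and via the full positive Levi subsystem) and invoking the inversion-count formula for $W_J$ to read off the dimension $\ell(w_J)$. Everything else amounts to routine bookkeeping with Bruhat cells.
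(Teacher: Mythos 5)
Your argument is correct and follows the standard proof; the paper does not prove this theorem itself but simply cites~\cite[Proposition 21.29]{Borel}, which carries out essentially this same root-theoretic argument. The one step that deserves an explicit word of justification is your final claim in part~(2) that the bijective morphism $\pi(C(w)) \to \pi_J(C(w))$ for $w \in W^J$ is an isomorphism of varieties: over $\C$ one may invoke Zariski's main theorem (a bijective morphism between normal varieties in characteristic zero is an isomorphism), or, more in keeping with your calculation, note that the product map $U_w' \times P_J \to BwP_J$, $(u,p) \mapsto uwp$, is itself an isomorphism by the same trivial-intersection computation, so that $\pi_J(C(w)) \cong U_w'$ compatibly with the identification $\pi(C(w)) \cong U_w'$.
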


\begin{proof}
See~\cite[Proposition 21.29]{Borel}.
\end{proof}

\subsection{A description of the Bruhat order on involutions.}

In this final subsection of our preliminaries section, we briefly review
the descriptions of the Bruhat order on involutions, partial involutions,
and the rook matrices. 

Let $n$ be a positive integer, and let $\mbf{DB}_n$ denote the Borel subgroup 
$\mbf{B}_n\times \mbf{B}_n$ of the doubled general linear group $\mbf{GL}_n\times \mbf{GL}_n$. 
As we mentioned before, the $\mbf{DB}_n$-orbits in $\mbf{Mat}_n$ 
are parametrized by the rook monoid, $R_n$, which is partially ordered 
with respect to Bruhat-Chevalley-Renner order defined as (\ref{A:BCR}). 
There are various combinatorial ways of re-writing this partial order, 
see~\cite{PPR,MillerSturmfels}. We present the description that is 
given in~\cite{MillerSturmfels}, where Miller and Sturmfels consider
$\mbf{B}^-_n\x \mbf{B}_n$-orbit closures in $\mbf{Mat}_n$.
The resulting Bruhat poset on $R_n$ is isomorphic to the one 
that is described in~\cite{PPR}.

Let $M$ be an element from $R_n$. 
For indices $i,j\in \{1,\dots, n\}$, the {\em $(i,j)$-th upper-left submatrix}
of $M$ is the $i \x j$ submatrix $M_{i\x j}$ whose $(r,s)$-th entry
is the $(r,s)$-th entry of $M$ for $r\in\{1,\dots, i\}$ and $s\in \{1,\dots,j\}$.
Let us denote the rank of a matrix $A$ by $\mt{rank}(A)$. 
The {\em rank-control matrix} of $M$ is the $n\times n$ 
matrix $\mt{Rk}(M)$ whose $(i,j)$-th entry is the rank of $M_{i\x j}$
for $i,j\in \{1,\dots, n\}$. In other words, $\mt{Rk}(M)=(\mt{rank}(M_{i\x j}))_{i,j=1}^n$.
\begin{Example}
Let 
\begin{align}\label{A:example1}
M= 
\begin{bmatrix}
1 & 0 & 0 & 0 & 0 & 0 \\
0 & 0 & 0 & 0 & 0 & 0 \\
0 & 0 & 0 & 0 & 1 & 0 \\
0 & 1 & 0 & 0 & 0 & 0 \\
0 & 0 & 1 & 0 & 0 & 0 \\
0 & 0 & 0 & 0 & 0 & 0 
\end{bmatrix}
\implies 
\mt{Rk}(M) = 
\begin{bmatrix}
1 & 1 & 1 & 1 & 1 & 1 \\
1 & 1 & 1 & 1 & 1 & 1 \\
1 & 1 & 1 & 1 & 2 & 2 \\
1 & 2 & 2 & 2 & 3 & 3 \\
1 & 2 & 3 & 3 & 4 & 4 \\
1 & 2 & 3 & 3 & 4 & 4 
\end{bmatrix}
\end{align}
\end{Example}
In this terminology, the Bruhat order on $R_n$ is given by 
$M\leq M'$ if and only if $\mt{rank}(M_{i\x j})\geq \mt{rank}(M'_{i\x j})$
for all $i,j\in \{1,\dots, n\}$, see~\cite[Lemma 15.19]{MillerSturmfels}.
In Section~\ref{S:6}, we will employ rank control matrices 
which are defined in terms of \emph{lower-left submatrices}, 
with the analogous Bruhat order.

We now consider (partial) involutions. 
The Bruhat order on involutions is studied by 
Richardson and Springer in the references~\cite{RS90,RS94},
and more combinatorially by Incitti in~\cite{Incitti}.
The description of the Bruhat order on `partial involutions' 
in terms of rank-control matrices is due to Bagno and Cherniavsky~\cite{BagnoCherniavsky}.
Here, by a {\em partial involution} we mean a 
rook matrix $M\in R_n$ which is symmetric, that is, $M= M^\top$.
If, in addition, $\mt{rank}(M)=n$, then the partial involution is 
the matrix representation of an involution, that is, $M^2 =id$,
and $M\in S_n$. Let $P_n$ denote the set of all partial 
involutions in $R_n$, and let $I_n$ denote the set of all
involutions in $S_n$. Then $I_n \subset P_n\subset R_n$.

The {\em congruence Bruhat order} on $P_n$ (and on $I_n$)
is defined as follows:
For $M$ and $M'$ two partial involutions from $P_n$, 
$M\leq M'$ if and only if $\mbf{O}(M)\subseteq \overline{\mbf{O}(M')}$,
where $\mbf{O}(M)$ (resp. $\mbf{O}(M')$) is the 
$\mbf{B}_n$-orbit of $M$ (resp. of $M'$) under the action of 
$\mbf{B}_n$ on the variety of all symmetric $n\x n$ matrices
via $B\cdot M = (B^{-1})^\top  M B^{-1}$ ($B\in \mbf{B}_n$). 
It is shown in~\cite{BagnoCherniavsky} that the congruence 
order on $P_n$ agrees with the restriction of the Bruhat order,
in terms of rank-control matrices, from $R_n$
to $P_n$. The restriction of the congruence Bruhat order from $P_n$ to $I_n$
agrees with the Bruhat order that is studied by 
Richardson-Springer and Incitti.

\section{Proof of Theorem~\ref{T:main1}}\label{S:3}

We preserve the notation from the previous section.
Let $P_J$ be a standard parabolic subgroup and let 
$L$ be a Levi subgroup of $P_J$ such that $T\subseteq L$. 
Since $R_u(P_J)$ is normal in $P_J$, we know 
that $R_u(P_J)$ is a $T$-stable unipotent subgroup of $U$.
Also, if $U_L$ denotes the intersection $L\cap U$, that is, the 
maximal unipotent subgroup of the Borel subgroup $B_L:= L\cap B$,
then $U_L$ is $T$-stable as well. Moreover, the unipotent subgroups $R_u(P_J)$ and 
$U_L$ are complementary in the sense that  
$U = U_L \cdot R_u(P_J) = R_u(P_J) \cdot U_L$.
We proceed with reviewing some standard facts.

\begin{Lemma}\label{L:1}
Let $P$ be a parabolic subgroup and let $L$ be a Levi subgroup 
such that $P= R_u(P)\rtimes L$. 
Then the map
\begin{align*}
R_u(P) \times L &\longrightarrow P \\
(x,y) &\longmapsto xy
\end{align*}
is an isomorphism of varieties.
\end{Lemma}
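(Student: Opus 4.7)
The plan is to exhibit an explicit regular inverse to the multiplication map. First I would note that $\mu \colon R_u(P)\times L \to P$, $(x,y)\mapsto xy$, is a morphism of varieties since it is the restriction of multiplication on $P$ to the closed subvariety $R_u(P)\times L \hookrightarrow P\times P$. The hypothesis $P = R_u(P)\rtimes L$ says that every $p \in P$ admits a unique factorization $p=xy$ with $x\in R_u(P)$ and $y\in L$, so $\mu$ is a bijection.

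The main step is to show that the inverse is also a morphism of varieties. Since $R_u(P)$ is a closed normal subgroup of $P$, the quotient map $\pi\colon P \to P/R_u(P)$ exists and is a morphism of algebraic groups, and by the Levi decomposition hypothesis the composition
$$
L \hookrightarrow P \xrightarrow{\;\pi\;} P/R_u(P)
$$
is an isomorphism of algebraic groups. Let $\iota$ denote its inverse. Then the assignment
$$
p \longmapsto \bigl( p\cdot \iota(\pi(p))^{-1},\ \iota(\pi(p)) \bigr)
$$
is a morphism $P \to P\times L$, as it is built from the morphisms $\pi$, $\iota$, inversion, and multiplication. The first coordinate actually lands in $R_u(P)$ because $p$ and $\iota(\pi(p))$ have the same image under $\pi$ by construction, so this defines a morphism $P \to R_u(P)\times L$. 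Comparing set-theoretically with the unique factorization $p=xy$ shows it is the inverse of $\mu$.

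The only mild subtlety is the existence of $P/R_u(P)$ as an algebraic group and the fact that $L \to P/R_u(P)$ is an isomorphism of algebraic groups (not merely of abstract groups); both are classical and I would simply cite them from Borel's book rather than reprove them. Apart from this invocation, the lemma is essentially an algebro-geometric restatement of the Levi decomposition, so I do not expect any real obstacle.
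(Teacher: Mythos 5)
Your proof is correct. The paper handles this lemma with a one-line citation to Borel (Chapter~I, 1.11), which states precisely the general fact --- that a semi-direct product $H\rtimes K$ of algebraic groups is isomorphic as a variety to $H\times K$ --- so your argument, which constructs the inverse morphism via the quotient $P/R_u(P)$ and the isomorphism $L\cong P/R_u(P)$, is in effect supplying the standard proof behind the citation rather than taking a different route.
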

\begin{proof}
The semi-direct product $H\rtimes K$ of two algebraic groups $H$ and $K$ 
is isomorphic to $H\times K$ as a variety, see~\cite[Chapter I, 1.11]{Borel}. 
\end{proof}

\begin{Lemma}\label{L:2}
Let $P^-$ denote the parabolic subgroup opposite to $P$ such that 
$P\cap P^- = L$. Then the map
\begin{align*}
R_u(P^-) \times P &\longrightarrow G \\
(x,y) &\longmapsto xy
\end{align*}
is an isomorphism onto an open subset of $G$. 
The image is equal to $P^-\cdot P$ in $G$.
\end{Lemma}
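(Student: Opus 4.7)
My plan is to prove this classical ``big cell'' statement in three stages: injectivity, bijectivity of the differential, and identification of the image. Throughout, I will use freely that $P = L \ltimes R_u(P)$ and $P^- = L \ltimes R_u(P^-)$, and that the Lie algebra admits the triangular decomposition
\[
\mt{Lie}(G) \;=\; \mt{Lie}(R_u(P^-)) \oplus \mt{Lie}(L) \oplus \mt{Lie}(R_u(P)),
\]
obtained by collecting the $T$-root spaces according to whether the corresponding root lies in $R_u(P^-)$, in $L$, or in $R_u(P)$.

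For injectivity, suppose $x_1 y_1 = x_2 y_2$ with $x_i \in R_u(P^-)$ and $y_i \in P$. Then $x_2^{-1}x_1 = y_2 y_1^{-1}$ lies in $R_u(P^-) \cap P$. Since $R_u(P^-) \subseteq P^-$ and $P^- \cap P = L$, this intersection is contained in $R_u(P^-)\cap L$, and the latter is trivial by Lemma~\ref{L:1} applied to $P^-$. Hence $x_1 = x_2$ and $y_1 = y_2$.

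Next I would compute the differential of the multiplication map $\mu: R_u(P^-) \times P \to G$ at the identity. By Lemma~\ref{L:1}, the source is smooth of dimension $\dim R_u(P^-) + \dim L + \dim R_u(P)$, which by the displayed decomposition equals $\dim G$. The differential of $\mu$ at $(1,1)$ is the addition map $\mt{Lie}(R_u(P^-)) \oplus \mt{Lie}(P) \to \mt{Lie}(G)$, which is precisely the identification produced by the decomposition above and is therefore a linear isomorphism. Using left translation by $x$ on the first factor and right translation by $y$ on $G$, I obtain that the differential of $\mu$ at an arbitrary point $(x,y)$ is also an isomorphism; that is, $\mu$ is étale. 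An injective étale morphism between irreducible smooth varieties of the same dimension is an open immersion, so $\mu$ is an isomorphism onto an open subvariety of $G$.

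Finally, I identify the image. By construction the image is $R_u(P^-)\cdot P$. Since $L \subseteq P$ and $P^- = R_u(P^-)\cdot L$, we obtain
\[
P^-\cdot P \;=\; R_u(P^-)\cdot L \cdot P \;=\; R_u(P^-)\cdot P,
\]
which is exactly the image of $\mu$. The main (and really only) technical point is the injectivity together with verifying that the tangent map at the identity is an isomorphism; everything else follows from the semidirect product structure of $P$ and $P^-$ and standard homogeneity arguments.
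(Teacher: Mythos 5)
Your proof is correct. The paper itself simply cites \cite[Proposition 14.21]{Borel} without argument, so there is no in-paper proof to compare against; what you have written is essentially the standard proof of that classical ``big cell'' statement, supplied in full. The three ingredients you isolate — injectivity via $R_u(P^-)\cap P = R_u(P^-)\cap L = \{e\}$, the fact that the multiplication map is \'etale because $d\mu_{(e,e)}$ realizes the triangular decomposition $\mathfrak g = \mathfrak r_u(\mathfrak p^-)\oplus\mathfrak l\oplus\mathfrak r_u(\mathfrak p)$ (propagated to all points by left/right translation), and the rewriting $P^-\cdot P = R_u(P^-)\cdot L\cdot P = R_u(P^-)\cdot P$ — are exactly the ones in the source. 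The only step you invoke as a black box that is worth being aware of is that an injective \'etale morphism of varieties over an algebraically closed field is an open immersion; this is standard but not entirely elementary, and it is the point at which references like Borel's book do the real work. Your plan is complete and would compile into a valid replacement for the citation.
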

\begin{proof}
See~\cite[Proposition 14.21]{Borel}.
\end{proof}

By combining Lemmas~\ref{L:1} and~\ref{L:2} we obtain 
the following result.

\begin{Proposition}\label{P:12}
The map
\begin{align}\label{A:P1}
R_u(P^-) \times R_u(P) \times L &\longrightarrow G \\
(x,y,z) &\longmapsto xyz \notag
\end{align}
is an isomorphism onto $P^-\cdot P$, which is an open subset of $G$.
Moreover, we have
\begin{enumerate}
\item the map (\ref{A:P1}) is compatible with the projection $\pi_L :G\rightarrow G/L$. 
In other words, the map  
\begin{align*}
\phi_L: R_u(P^-) \times R_u(P) &\longrightarrow G/L \\
(x,y) &\longmapsto xyL
\end{align*}
is an isomorphism onto an open subvariety of $G/L$.
\item 
The map (\ref{A:P1}) is compatible with the projection $\pi_P :G\rightarrow G/P$. 
In other words, the map   
\begin{align}\label{A:P12}
\phi_P: R_u(P^-) &\longrightarrow G/P \\
x &\longmapsto xP
\end{align}
is an isomorphism onto an open subvariety of $G/P$.
\end{enumerate}
\end{Proposition}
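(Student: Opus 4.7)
The plan is to chain Lemmas~\ref{L:1} and~\ref{L:2}. By Lemma~\ref{L:2}, multiplication gives an isomorphism $R_u(P^-) \times P \xrightarrow{\sim} P^-\cdot P$ onto an open subset of $G$, and by Lemma~\ref{L:1}, multiplication gives $R_u(P) \times L \xrightarrow{\sim} P$. Replacing the $P$-factor in the first map by $R_u(P)\times L$ via the second, one obtains directly that $(x,y,z)\mapsto xyz$ is an isomorphism $R_u(P^-)\times R_u(P)\times L \xrightarrow{\sim} P^-\cdot P$, yielding the first assertion of the proposition.

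For part (1), the right multiplication action of $L$ on $G$ preserves the open subset $P^-\cdot P$ because $L\subseteq P$, and under the product decomposition just established this action is simply right translation on the third factor: for $\ell\in L$ and $(x,y,z)\in R_u(P^-)\times R_u(P)\times L$, one has $(xyz)\ell = xy(z\ell)$. Hence the $L$-orbits on $P^-\cdot P$ correspond bijectively to the fibres of the projection onto $R_u(P^-)\times R_u(P)$, so $\phi_L$ is the induced bijection from this quotient onto $\pi_L(P^-\cdot P)$. Since $\pi_L\colon G \to G/L$ is an open morphism (being a principal $L$-bundle), the image is open in $G/L$, and $\phi_L$ is an isomorphism of varieties onto it.

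For part (2), the same principle applies, this time with right multiplication by $P$. The action of $P$ preserves $P^-\cdot P$, and through Lemma~\ref{L:2} it corresponds to right translation on the second factor of $R_u(P^-)\times P$. The geometric quotient is therefore $R_u(P^-)$, and composition with $\pi_P$ produces the stated isomorphism $\phi_P\colon R_u(P^-) \to G/P$ onto the open image $\pi_P(P^-\cdot P)$, which is the big cell of $G/P$ opposite to $P$.

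There is no substantive obstacle; the one point requiring attention is verifying that the right $L$- and right $P$-actions, when transported through the product decompositions, decouple onto a single factor. This is immediate from associativity of multiplication in $G$. Once it is in hand, both (1) and (2) follow formally from the openness of the quotient maps $\pi_L$ and $\pi_P$.
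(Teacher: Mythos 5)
Your proposal is correct and takes exactly the approach the paper intends: the paper gives no explicit proof of Proposition~\ref{P:12}, merely introducing it with ``By combining Lemmas~\ref{L:1} and~\ref{L:2} we obtain the following result,'' and you carry out precisely that combination, then descend to $G/L$ and $G/P$ by observing that right translation by $L$ (resp.\ $P$) decouples onto the last factor(s) of the product decomposition.
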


\begin{Remark}\label{R:Bactiononradical}
Since $L$ normalizes $R_u(P)$, we see that $P$,
hence $B$, acts on $R_u(P)$ by conjugation. 
Moreover, since $R_u(P)\cong R_u(P^-)$, if we 
twist, that is to say conjugate, the action of $P$ on its unipotent
radical by the longest element of $W^P$, we get
an action on the unipotent radical $R_u(P^-)$.
Thus, the left multiplication action of $B$ on the open subvariety 
$\phi_P(R_u(P^-)) \subset G/P$ is obtained from a suitably 
twisted (conjugated) conjugation action of $B$ on $R_u(P)$. 
\end{Remark}

As an immediate consequence of Proposition~\ref{P:12} we 
have the following corollary.

\begin{Corollary}\label{C:P:12}
Let $\pi_P$, $\pi_L$, and $\pi_{L,P}$ denote the canonical projections
\begin{align*} 
\pi_P &: G\rightarrow G/P, \\
\pi_L  &: G\rightarrow G/L, \\
\pi_{L,P} &: G/L\rightarrow G/P.
\end{align*}
If $V$ denotes $\phi_P (R_u(P^-))$ in $G/P$, 
then $\pi_{L,P}^{-1}(V) \cong R_u(P) \times V$ as a variety.
Furthermore, the map  $\tilde{s}: V \rightarrow R_u(P)\times V$ 
defined by $\tilde{s}(uP) = (id,uP)$ induces a local section $s$ 
of $\pi_{L,P}$,
that satisfies $\pi_{L,P} \circ s (uP) = uP$ for all $uP\in V$.
\end{Corollary}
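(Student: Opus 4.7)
The plan is to lift the problem from $G/L$ up to $G$ via the projection $\pi_L$, and then exploit the product decomposition of the open cell $P^-\cdot P$ furnished by Proposition~\ref{P:12}. First I would identify the total preimage $\pi_P^{-1}(V)\subseteq G$. Since $V=\phi_P(R_u(P^-))=R_u(P^-)\cdot P/P$, a coset $gP$ lies in $V$ exactly when $g\in R_u(P^-)\cdot P$. Writing $P^-=L\cdot R_u(P^-)$ and using $L\subseteq P$, this set coincides with the big cell $P^-\cdot P$, which by Proposition~\ref{P:12} is isomorphic as a variety to $R_u(P^-)\times R_u(P)\times L$ via $(x,y,z)\mapsto xyz$.

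Next I would pass to the quotient by the right $L$-action. Since $\pi_{L,P}^{-1}(V)=\pi_L(\pi_P^{-1}(V))$, and since this $L$-action translates only the third factor of the above decomposition, passing to the quotient yields the isomorphism
\[
\pi_{L,P}^{-1}(V)\;\cong\; R_u(P^-)\times R_u(P).
\]
Composing the first coordinate with $\phi_P:R_u(P^-)\xrightarrow{\sim}V$ from Proposition~\ref{P:12}(2) and swapping factors, I identify $\pi_{L,P}^{-1}(V)$ with $R_u(P)\times V$, establishing the first claim.

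For the section, I would trace this isomorphism explicitly. Under it, a pair $(u,\id)\in R_u(P^-)\times R_u(P)$ corresponds to the coset $uL\in G/L$; hence $(\id,uP)\in R_u(P)\times V$ also corresponds to $uL$. Therefore $\tilde s(uP)=(\id,uP)$ induces the morphism $s\colon V\to\pi_{L,P}^{-1}(V)\subseteq G/L$ given by $s(uP)=uL$, and the section identity $\pi_{L,P}\circ s(uP)=uP$ is immediate.

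Taken as a whole, the corollary is essentially a bookkeeping consequence of Proposition~\ref{P:12}; I do not anticipate a genuine obstacle. The only step requiring a little care is the identification $\pi_P^{-1}(V)=P^-\cdot P$, whose verification invokes the decomposition $P^-=R_u(P^-)\cdot L$ together with $L\subseteq P$. Once that equality is in place, the section $s$ is nothing but the restriction of $\pi_L$ to $R_u(P^-)\subseteq G$, transported through $\phi_P$.
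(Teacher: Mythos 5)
Your argument is correct and is precisely the unwinding that the paper has in mind when it asserts Corollary~\ref{C:P:12} as an ``immediate consequence'' of Proposition~\ref{P:12} (the paper gives no separate proof). You correctly identify $\pi_P^{-1}(V)=R_u(P^-)\cdot P=P^-\cdot P$, pass to the $L$-quotient to land on $\phi_L\colon R_u(P^-)\times R_u(P)\xrightarrow{\sim}\pi_{L,P}^{-1}(V)$, swap factors via $\phi_P$, and read off the section as $s(uP)=uL$.
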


Now we are ready to describe a local trivialization of the fibre bundle
$\pi_{L,P}: G/L\rightarrow G/P$.

Let $V$ denote, as in Corollary~\ref{C:P:12}, the open set
$V= \phi_P(R_u(P^-))$ in $G/P$. This set is called the 
{\em canonical affine open neighborhood} 
of the identity coset $id P$.
To obtain the canonical affine open neighborhood of another 
point $gP$ in $G/P$, we simply translate $V$ to 
\hbox{$g\cdot V:= g \cdot R_u(P^-) P/P = gR_u(P^-) P/P$.}
\\

{\em Notation:
From now on we will denote the canonical affine open neighborhood
of a point $gP \in G/P$ by $V(g)$, so, 
\begin{align}
V(g) := g\cdot V.
\end{align}
In particular, we have $V(id) = V$.}
\\

Let $y,w$ be two elements from $W^P$, and 
let $X(w)$ denote the Schubert variety $\overline{B\cdot wP}$
in $G/P$. If $y\leq w$ in Bruhat order, then the 
{\em canonical $T$-stable transversal to $yP$ in $X(w)$} is defined by 
\begin{align}\label{A:canonicalTstabletransversal}
\mc{N}_{y,w}:= ( \conj{y}{ R_u(P^-)} \cap U^-) yP) \cap X(w).
\end{align}

\begin{Lemma}\label{L:BrionPolo}
The canonical $T$-stable transversal $\mc{N}_{y,w}$ 
is a closed, $T$-stable, subvariety of 
$$
(yR_u(P^-) idP) \cap X(w) = V(y) \cap X(w).
$$
Furthermore, there exists a $T$-equivariant isomorphism,
\begin{align}
V(y) \cap X(w) \cong (B\cdot yP) \times \mc{N}_{y,w}.
\end{align}
\end{Lemma}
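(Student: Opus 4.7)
The plan is to exploit the direct-spanning of unipotent radicals by $T$-stable root subgroups in order to split the canonical affine neighborhood $V(y)$ into two complementary $T$-stable pieces, one matching the Schubert cell at $yP$ and one containing the transversal, and then to use the $B$-invariance (hence $U$-invariance) of $X(w)$ to transport this splitting to $V(y)\cap X(w)$. The main ingredients are Proposition~\ref{P:12} (which says $V(y)\cong R_u(P^-)$ via $v\mapsto yvP$) and the general fact that every $T$-stable unipotent subgroup of $G$ is directly spanned by the root subgroups it contains.

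First I would produce the product decomposition of $V(y)$. The conjugate $\conj{y}{R_u(P^-)}$ is a $T$-stable unipotent subgroup of $G$, hence directly spanned by the root subgroups $U_\alpha$ it contains. Partitioning those roots according to whether $\alpha\in\varPhi^+$ or $\alpha\in\varPhi^-$ gives a $T$-equivariant isomorphism of varieties
\begin{equation*}
\conj{y}{R_u(P^-)} \;\cong\; \bigl(\conj{y}{R_u(P^-)}\cap U\bigr)\times\bigl(\conj{y}{R_u(P^-)}\cap U^-\bigr),
\end{equation*}
and translating this through $\Psi\colon(u_1,u_2)\mapsto u_1u_2\,yP$ yields a $T$-equivariant isomorphism from this product onto $V(y)=yR_u(P^-)P/P$. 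Geometrically, the first factor matches the Schubert cell: since $T$ fixes $yP$ and the stabilizer of $yP$ in $U$ is $U\cap yPy^{-1}$, which complements $\conj{y}{R_u(P^-)}\cap U$ inside $U$, the orbit map realizes $(\conj{y}{R_u(P^-)}\cap U)\cdot yP = U\cdot yP = B\cdot yP$ as a $T$-equivariant isomorphism. The second factor, $(\conj{y}{R_u(P^-)}\cap U^-)\cdot yP$, is then a closed $T$-stable affine subvariety of $V(y)$ containing $yP$.

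Next I would verify the properties of $\mc{N}_{y,w}$. By definition it is the intersection of the closed $T$-stable set $(\conj{y}{R_u(P^-)}\cap U^-)\cdot yP$ with the closed $T$-stable set $X(w)$, hence it is closed and $T$-stable inside $V(y)\cap X(w)$. For the isomorphism, define
\begin{equation*}
\mu\colon (B\cdot yP)\times \mc{N}_{y,w}\longrightarrow V(y)\cap X(w),\qquad (u_1\cdot yP,\xi)\longmapsto u_1\cdot\xi,
\end{equation*}
using the identification of $B\cdot yP$ with $\conj{y}{R_u(P^-)}\cap U$ from Step 1. It is the restriction of $\Psi$ to the product $(\conj{y}{R_u(P^-)}\cap U)\times\mc{N}_{y,w}$, so it is a $T$-equivariant locally closed immersion. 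To see its image lies in $V(y)\cap X(w)$, note that $X(w)$ is $B$-stable hence $U$-stable, and $u_1\in U$. To see surjectivity, take $p\in V(y)\cap X(w)$; write $p=u_1u_2\,yP$ uniquely with $u_1\in\conj{y}{R_u(P^-)}\cap U$ and $u_2\in\conj{y}{R_u(P^-)}\cap U^-$, and set $\xi:=u_2\,yP=u_1^{-1}\cdot p$. Then $\xi\in X(w)$ by $U$-stability, and by construction $\xi\in(\conj{y}{R_u(P^-)}\cap U^-)\cdot yP$, so $\xi\in\mc{N}_{y,w}$ and $\mu(u_1\cdot yP,\xi)=p$. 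Uniqueness of the decomposition gives injectivity and an inverse morphism, so $\mu$ is the desired $T$-equivariant isomorphism.

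The one mildly subtle point, and the main thing I would be careful with, is that the factorization $u_1u_2$ delivered by $\Psi$ really does send $u_1^{-1}\cdot p$ back into $\mc{N}_{y,w}$ rather than merely into $X(w)\cdot \text{something}$; this is exactly why the splitting into $U$-part and $U^-$-part (as opposed to any other ad-hoc factorization of $\conj{y}{R_u(P^-)}$) is forced, since only the $U$-factor lies in $B$ and is visible to the $B$-invariance of $X(w)$. Everything else is $T$-equivariant essentially by construction, because $T$ normalizes $\conj{y}{R_u(P^-)}$, $U$ and $U^-$, and fixes $yP$.
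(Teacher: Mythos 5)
Your proof is correct and supplies the details that the paper omits: the paper simply refers the reader to the discussion at the end of Section~1.2 of Brion--Polo, and your argument is a faithful reconstruction of that standard argument — splitting the $T$-stable unipotent group $\conj{y}{R_u(P^-)}$ into its intersections with $U$ and $U^-$ via direct spanning by root subgroups, identifying the $U$-part with the Schubert cell $B\cdot yP$, and using $B$-stability of $X(w)$ to push the product decomposition down to $V(y)\cap X(w)$. All the checkpoints (that $(\conj{y}{R_u(P^-)}\cap U)\cdot yP = U\cdot yP = B\cdot yP$ because $U\cap yPy^{-1}$ and $\conj{y}{R_u(P^-)}\cap U$ directly span $U$; that the factor $\xi = u_1^{-1}\cdot p$ lands back in $\mc{N}_{y,w}$ precisely because $u_1\in U\subseteq B$ and $X(w)$ is $B$-stable; and $T$-equivariance from $T$ normalizing all the relevant unipotent groups and fixing $yP$) are handled correctly.
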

\begin{proof}
See the discussion at the end of~\cite[Section 1.2]{BrionPolo}.
\end{proof}

\begin{Lemma}\label{L:finiteopencover}
The finite collection $\{ V(y) \}_{y\in W^P}$ forms an open cover of $G/P$.
\end{Lemma}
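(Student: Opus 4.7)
The plan is to combine the parabolic Bruhat decomposition of $G/P$ from Theorem~\ref{T:ParabolicBruhat}(3) with a direct root-theoretic containment. Concretely, I will show that for each $w \in W^P$ the Schubert cell $BwP/P$ is entirely contained in the open set $V(w) = wR_u(P^-)P/P$; since by Theorem~\ref{T:ParabolicBruhat}(3) these cells partition $G/P$, this proves the $V(y)$'s cover. The fact that each $V(y)$ is open in $G/P$ is immediate, since $V = V(\id)$ is open by Proposition~\ref{P:12}(2) and $V(y)$ is its translate by the element $y$ of $G$, which acts as an automorphism of $G/P$.

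To prove the containment $BwP \subseteq wR_u(P^-)P$, equivalently $w^{-1}Bw \cdot P \subseteq R_u(P^-) P$, I would first eliminate $T$: since $T \subseteq L \subseteq P$ and $w$ normalizes $T$, one has $B = TU$ and hence $w^{-1}Bw \cdot P = (w^{-1}Uw)\cdot P$. The next step is to invoke the decomposition $U = U'_w \cdot U_w$ obtained from (\ref{A:two subgroups}), conjugated by $w^{-1}$: the root formulae give $w^{-1}U_w w \subseteq U \subseteq P$ and $w^{-1}U'_w w \subseteq U^-$, so $(w^{-1}Uw)\cdot P \subseteq U^-\cdot P$. This reduces the problem to showing $U^-\cdot P \subseteq R_u(P^-)\cdot P$.

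For the final step, the Levi decomposition $P^- = L\ltimes R_u(P^-)$ applied to the root-subgroup product for $U^-$ yields $U^- = (U^-\cap L)\cdot R_u(P^-)$. Combining this with the facts that $L$ normalizes $R_u(P^-)$ and that $L\subseteq P$, one gets
\[
U^-\cdot P \;\subseteq\; L\cdot R_u(P^-)\cdot P \;=\; R_u(P^-)\cdot L\cdot P \;=\; R_u(P^-)\cdot P,
\]
which finishes the containment.

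The only delicate point I anticipate is bookkeeping: namely, pinning down the directions of the inclusions $w^{-1}U_w w\subseteq U$ and $w^{-1}U'_w w\subseteq U^-$ from the root descriptions of (\ref{A:two subgroups}), and checking that the product $U'_w\cdot U_w$ (rather than $U_w\cdot U'_w$) is the order needed so that conjugation lands in $U^-\cdot U$. These are routine from the fact that both subgroups are directly spanned by root subgroups and the partition of $\varPhi^+$ into $\varPhi^+(U_w)$ and $\varPhi^+(U'_w)$. Note that although the statement singles out $w \in W^P$, the inclusion $BwP/P \subseteq V(w)$ in fact works for every $w \in W$; passing to minimal-length coset representatives is only needed so that the Schubert cells exhaust $G/P$ without repetition.
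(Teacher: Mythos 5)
Your proof is correct and follows the same overall strategy as the paper---use the parabolic Bruhat decomposition (Theorem~\ref{T:ParabolicBruhat}(3)) to reduce to showing $B\cdot wP/P \subseteq V(w)$ for each $w\in W^P$---but it establishes that key containment by a different route. The paper gets $B\cdot yP\subseteq V(y)$ by citing Lemma~\ref{L:BrionPolo} (the Brion--Polo transversal decomposition $V(y)\cap X(w)\cong (B\cdot yP)\times\mc{N}_{y,w}$, applied with $w=w_0$), whereas you prove it directly from the root data: writing $B=TU$, using $U=U_w'\cdot U_w$ with $w^{-1}U_w w\subseteq U$ and $w^{-1}U_w' w\subseteq U^-$, and then the Levi--radical factorization $U^-=(U^-\cap L)\cdot R_u(P^-)$ together with $L$ normalizing $R_u(P^-)$ to absorb everything into $R_u(P^-)\cdot P$. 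Your version is more elementary and self-contained (it does not require the $T$-stable transversal machinery), at the cost of being longer; the paper's version is shorter by leaning on a structural result it needs anyway elsewhere. One small point worth noting explicitly: after writing $w^{-1}Bw\cdot P = T(w^{-1}Uw)\cdot P$, you should commute $T$ past $w^{-1}Uw$ (valid since $T$ normalizes $w^{-1}Uw$, being normalized by $w$) before dropping it into $P$; as you observe, the choice $U=U_w'\cdot U_w$ rather than $U_w\cdot U_w'$ is the one that makes the absorption into $P$ on the right work cleanly.
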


\begin{proof}
In the light of the Bruhat decomposition, 
if we consider the canonical $T$-stable transversals $\mc{N}_{y,w_0}$, then 
our claim follows from the fact that the canonical affine open
neighborhood $V(y)$ of $yP$ contains the $B$-orbit $B\cdot yP$ in $G/P$,
see Lemma~\ref{L:BrionPolo}.
\end{proof}

According to~\cite{Totaro14}, a {\em linear scheme} is a scheme which can be obtained 
by an inductive procedure starting with an affine space of any dimension, in such a way that 
the complement of a linear scheme imbedded in affine space is also a linear scheme,
and a scheme which can be stratified as a finite disjoint union of linear schemes is a linear scheme. 
Now we are ready to prove the main (new) result of this section.

\begin{Theorem}\label{T:main1}
The canonical projection $\pi_{L,P}: G/L\rightarrow G/P$ has the structure 
of an affine fibre bundle.
The variety $G/L$ is linear, in particular, $G/L$ has a finite disjoint decomposition 
into subvarieties of the form $\C^a \times (\C^*)^b$.
Moreover, the preimages under $\pi_{L,P}$ of the Borel orbits 
are affine open subvarieties of $G/L$. 
\end{Theorem}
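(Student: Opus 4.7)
The plan is to establish all three assertions together by assembling the local trivializations of $\pi_{L,P}$ provided by Corollary~\ref{C:P:12} over the finite open cover $\{V(y)\}_{y\in W^P}$ from Lemma~\ref{L:finiteopencover}, and then to superimpose the Bruhat stratification of $G/P$.

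\emph{Step 1 (Affine bundle structure).} Corollary~\ref{C:P:12} already trivializes $\pi_{L,P}$ over the canonical chart $V=V(\mt{id})$, giving $\pi_{L,P}^{-1}(V)\cong R_u(P)\times V$. Because the fibration $\pi_{L,P}$ is $G$-equivariant, left multiplication by a representative of $y\in W^P$ carries this trivialization to one over $V(y)=y\cdot V$. Since $V(y)\cong R_u(P^-)$ and the fiber $P/L\cong R_u(P)$ are both affine spaces, $\pi_{L,P}$ is Zariski-locally trivial with affine fiber, hence an affine fibre bundle.

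\emph{Step 2 (Cellular decomposition).} I would then refine the cover by the parabolic Bruhat decomposition $G/P=\bigsqcup_{y\in W^P} B\cdot yP$ from Theorem~\ref{T:ParabolicBruhat}. Each Schubert cell $B\cdot yP\cong\C^{\ell(y)}$ is contained in $V(y)$ by Lemma~\ref{L:BrionPolo}, so inside the trivialized chart
\begin{equation*}
\pi_{L,P}^{-1}(B\cdot yP)\;\cong\;R_u(P)\times(B\cdot yP)\;\cong\;\C^{\dim R_u(P)+\ell(y)}.
\end{equation*}
Ranging over $y\in W^P$ exhibits $G/L$ as a finite disjoint union of affine spaces; this is a decomposition of the required form $\C^a\times(\C^*)^b$ with $b=0$, and in particular $G/L$ is a linear scheme in Totaro's sense.

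\emph{Step 3 (Borel orbit preimages).} Each Borel orbit in $G/P$ is a Schubert cell $B\cdot yP$, and its preimage is precisely the affine stratum of Step~2. This preimage is $B$-stable by the equivariance of $\pi_{L,P}$ and is open inside the closed subvariety $\pi_{L,P}^{-1}(\overline{B\cdot yP})$, which gives the final assertion.

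The main obstacle is ensuring that the trivialization of Corollary~\ref{C:P:12}, originally constructed only over $V=V(\mt{id})$, transports along left translation to a trivialization over each $V(y)$ that is compatible with the $B$-action, so that the preimage of the Schubert cell $B\cdot yP\subset V(y)$ really factors as the product $R_u(P)\times(B\cdot yP)$. This hinges on the $G$-equivariance of $\pi_{L,P}$ together with the containment $B\cdot yP\subset V(y)$ from Lemma~\ref{L:BrionPolo}: the bundle $G/L\to G/P$ need not be globally trivial, but each individual Schubert cell is small enough to fit into a single chart, and that is what makes the preimage affine.
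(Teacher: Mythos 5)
Your proof is correct and follows essentially the same route as the paper: trivializing $\pi_{L,P}$ over the finite cover $\{V(y)\}_{y\in W^P}$, then restricting each trivialization to the Schubert cell $B\cdot yP\subset V(y)$ (via Lemma~\ref{L:BrionPolo}) to obtain the affine stratification $\pi_{L,P}^{-1}(B\cdot yP)\cong R_u(P)\times B\cdot yP$. The worry you raise at the end about $B$-equivariance of the trivialization is unnecessary for this theorem: once you have $\pi_{L,P}^{-1}(V(y))\cong R_u(P)\times V(y)$ as a trivialization over $V(y)$, restricting to any subset $S\subseteq V(y)$ automatically gives $\pi_{L,P}^{-1}(S)\cong R_u(P)\times S$ regardless of any group action, and the $B$-equivariance only becomes relevant later (in Proposition~\ref{C:minmax1}).
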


\begin{proof}
Let $\{ V(y) \}_{y\in W^P}$ be the open cover of $G/P$ as in Lemma~\ref{L:finiteopencover}.
By the discussion following Corollary~\ref{C:P:12}, 
we know that, over $V(y)$, the projection $\pi_{L,P}$ has a local section, and we have 
\begin{align}\label{A:localdecomposition}
\pi_{L,P}^{-1}(V(y)) \cong R_u(P)\times V(y).
\end{align}
This proves the first claim.

By restricting the isomorphism (\ref{A:localdecomposition}) 
to the quasi-affine subvariety $\pi_{L,P}^{-1}(B\cdot yP)$,  
and by using Lemma~\ref{L:BrionPolo} with $w=w_0$, 
we find that
\begin{align}\label{A:localdecomposition2}
\pi_{L,P}^{-1}(B\cdot yP) \cong R_u(P) \times B\cdot yP.
\end{align}
Since $\bigcup_{y\in W^P} B\cdot yP = G/P$, we have
\begin{align}\label{A:partitioning}
\bigcup_{y\in W^P} \pi_{L,P}^{-1}(B\cdot yP) = G/L.
\end{align}
Moreover, if $v,y\in W^P$ are two elements such that $v\neq y$,
then $\pi_{L,P}^{-1}(B\cdot yP) \cap \pi_{L,P}^{-1}(B\cdot vP) = \emptyset$.
In other words, (\ref{A:partitioning}) is a partition of $G/L$.
Since each of the factors on the right hand side of (\ref{A:localdecomposition2})
is an affine space, the proof of our theorem is finished.
\end{proof}

\begin{Corollary}\label{C:T:main1}
Let $y$ be an element from $W^P$ and let $C(y,L)$ denote
the preimage $\pi_{L,P}^{-1}(X(y))$ of the Schubert variety $X(y)$ in $G/L$.
Then we have $\pi_{L,P} \vert_{C(y,L)}: C(y,L) \rightarrow X(y)$ as an
affine fibre bundle over $X(y)$, and furthermore, $C(y,L)$ is a linear subvariety of $G/L$.
\end{Corollary}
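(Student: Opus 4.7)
The plan is to obtain this corollary by restricting every piece of the proof of Theorem~\ref{T:main1} to the Schubert variety $X(y) \subseteq G/P$. Because the main theorem produces a global fibre bundle structure on $\pi_{L,P}: G/L \to G/P$ together with a compatible Bruhat stratification, the statement for $C(y,L)$ should fall out essentially by pulling back to $X(y)$.

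First I would verify that the finite open cover $\{V(z)\}_{z \in W^P}$ from Lemma~\ref{L:finiteopencover} restricts to an open cover $\{V(z) \cap X(y)\}$ of $X(y)$; the nontrivial members of this cover are indexed by those $z \in W^P$ with $z \leq y$, by Lemma~\ref{L:BrionPolo} together with the Bruhat decomposition
\[
X(y) \;=\; \bigsqcup_{\substack{z \in W^P \\ z \leq y}} B\cdot zP.
\]
Over each $V(z)$, the local trivialization $\pi_{L,P}^{-1}(V(z)) \cong R_u(P) \times V(z)$ established in (\ref{A:localdecomposition}) restricts cleanly to
\[
\pi_{L,P}^{-1}(V(z) \cap X(y)) \;\cong\; R_u(P) \times (V(z) \cap X(y)),
\]
since a trivialization identifies the preimage of any closed subset of the base with the product of the fibre and that subset. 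The transition functions over intersections are inherited from the ambient bundle, so $\pi_{L,P}|_{C(y,L)}$ is an affine fibre bundle with typical fibre $R_u(P)$.

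For the linearity assertion I would then pull the Bruhat decomposition of $X(y)$ back through $\pi_{L,P}$, obtaining the disjoint decomposition
\[
C(y,L) \;=\; \bigsqcup_{\substack{z \in W^P \\ z \leq y}} \pi_{L,P}^{-1}(B\cdot zP).
\]
Each stratum is isomorphic to $R_u(P) \times (B\cdot zP)$ by the restriction (\ref{A:localdecomposition2}) already used in the proof of Theorem~\ref{T:main1}, which is a product of two affine spaces, hence an affine space. A finite disjoint union of affine spaces is a linear scheme in the sense of Totaro~\cite{Totaro14}, so $C(y,L)$ is a linear subvariety of $G/L$, as claimed.

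There is no substantial obstacle here; the only point requiring care is to confirm that the local trivializations of the affine bundle do indeed behave well under restriction to the closed subvariety $X(y) \subseteq G/P$, and that the strata $\pi_{L,P}^{-1}(B\cdot zP)$ patch together via the same gluing data that already appears in Theorem~\ref{T:main1}. Both are formal consequences of Proposition~\ref{P:12} and Corollary~\ref{C:P:12}, so this corollary is really a record of what the proof of the main theorem has already produced when read over a Schubert subvariety.
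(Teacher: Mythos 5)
Your proposal is correct and follows the same route as the paper, which simply notes that the corollary is a special case of the proof of Theorem~\ref{T:main1}; you have just made explicit the restriction of the open cover, the local trivializations, and the Bruhat stratification to $X(y)$.
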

\begin{proof}
The proof is a special case of the proof of Theorem~\ref{T:main1}.
\end{proof}

\begin{Proposition}\label{C:minmax1}
We preserve our previous notation. 
Let $O$ be a $B$-orbit in $G/P$. 
Then $\pi^{-1}_{L,P}(O) \subset G/L$ contains a unique closed $B$-orbit.
In addition, if we assume that 
$L$ is a spherical Levi subgroup of $G$, then $\pi^{-1}_{L,P}(O) \subset G/L$
has a unique dense $B$-orbit.
\end{Proposition}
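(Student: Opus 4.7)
The plan is to reduce the question to the action of the stabilizer $B_y := B \cap yPy^{-1}$ on a single fiber. After fixing $y \in W^P$ with $O = B \cdot yP$, the transitivity of $B$ on $O$ identifies $\pi_{L,P}^{-1}(O)$ with the $B$-homogeneous bundle over $B/B_y \cong O$ associated to the $B_y$-variety $F := \pi_{L,P}^{-1}(yP)$. This yields an inclusion-preserving bijection between $B$-orbits on $\pi_{L,P}^{-1}(O)$ and $B_y$-orbits on $F$, sending closed orbits to closed orbits and dense orbits to dense orbits, so the proposition reduces to the analogous claims for the $B_y$-action on $F$.

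I would parameterize $F$ next: since $R_u(P) \cap L = \{e\}$, the map $R_u(P) \to F$, $u \mapsto yuL$, is an isomorphism of varieties. Writing $b \in B_y$ uniquely as $b = y(lr)y^{-1}$ with $l \in L$ and $r \in R_u(P)$ (using $P = L \ltimes R_u(P)$), a direct computation gives the action $b \cdot u = l r u l^{-1}$ on $u \in R_u(P) \cong F$. Let $Z := B_y \cdot e$; under the identification this is $\{\, lrl^{-1} : b = y(lr)y^{-1} \in B_y \,\}$.

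To see that $Z$ is the unique closed $B_y$-orbit on $F$, I would use that $y \in N_G(T)$ implies $T \subset B_y$. Since $\varPhi(R_u(P))$ lies on one side of a hyperplane in the cocharacter lattice of $T$, I can pick a one-parameter subgroup $\lambda$ of $T$ with $\langle \lambda, y\alpha \rangle > 0$ for every $\alpha \in \varPhi(R_u(P))$. The action of $\lambda(t) \in T \subset B_y$ on $u \in R_u(P)$ is conjugation by $y^{-1}\lambda(t)y$, which scales the root subspace $U_\alpha$ by $t^{\langle \lambda, y\alpha\rangle}$, so $\lim_{t\to 0} \lambda(t) \cdot u = e$ for every $u$. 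Hence $e \in \overline{B_y \cdot u}$ for all $u \in F$, and by $B_y$-invariance of the closure, $Z \subseteq \overline{B_y \cdot u}$ for every $u$. Any closed $B_y$-orbit $C$ then contains $Z$, and being itself a single orbit must equal $Z$; moreover, a brief dimension argument (any orbit $Z'$ in $\overline{Z} \setminus Z$ would satisfy $\dim Z' = \dim Z$ by the same contraction, forcing $Z' = \overline{Z}$ and contradicting $Z \neq \emptyset$) shows that $Z$ is itself closed.

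For the second claim, I would assume $L$ is spherical. Then $G/L$ has only finitely many $B$-orbits, so $\pi_{L,P}^{-1}(O)$ does as well, and by Theorem~\ref{T:main1} the preimage $\pi_{L,P}^{-1}(O) \cong R_u(P) \times O$ is irreducible; the union of the non-top-dimensional orbits is therefore a proper closed subvariety, whose complement is the unique dense $B$-orbit. The main obstacle in this plan is the construction of the contracting one-parameter subgroup together with the bookkeeping around the shift by $y$ in the $T$-weights on $R_u(P)$; but this shift is cosmetic, and the existence of a suitable $\lambda$ relies only on $\varPhi(R_u(P))$ sitting inside an open half-space of the coroot space.
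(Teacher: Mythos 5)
Your proof is correct, but for the closed-orbit half it takes a genuinely different route from the paper's. The paper works with the explicit product trivialization $\pi_{L,P}^{-1}(B\cdot yP)\cong R_u(P)\times B\cdot yP$ from Theorem~\ref{T:main1} and reduces, via Remark~\ref{R:general} on diagonal actions on products, to the twisted conjugation action of $B$ on $R_u(P)$, observing that the identity is the unique fixed point. You instead descend to the isotropy group $B_y=B\cap yPy^{-1}$ acting on the single fiber $F\cong R_u(P)$, and make the degeneration explicit: a cocharacter $\lambda$ of $T$ with $\langle\lambda, y\alpha\rangle>0$ for every $\alpha\in\varPhi(R_u(P))$ (such $\lambda$ exists because the $R_u(P)$-roots, hence their $y$-translates, lie in an open half-space of the character space---you wrote ``cocharacter lattice,'' which is a slip) contracts every point of $F$ to the identity. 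What your approach buys is precision: the paper's ``unique fixed point'' remark does not by itself yield a unique closed $B$-orbit, and the step where it asks the reader to ``ignore the twisting by $y$'' is informal; your contraction handles both issues cleanly. One small repair: the parenthetical dimension argument is garbled---the correct reading is that any orbit $Z'\subset\overline{Z}\setminus Z$ would have $Z\subseteq\overline{Z'}$ by the same contraction, forcing $\dim Z'\geq\dim Z$, which contradicts $\dim Z'<\dim Z$; this shows $\overline{Z}=Z$. Your argument for the dense orbit in the spherical case (finitely many orbits on the irreducible $R_u(P)\times O$) is the same as the paper's.
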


\begin{proof}
Since $\pi_{L,P}$ is $B$-equivariant, the preimage of $O$ in $G/L$ is $B$-stable.
Let us first assume that $L$ is a spherical subgroup. 
Since $\pi^{-1}_{L,P}(O)$ contains only finitely many $B$-orbits, 
exactly one of them is dense. This proves the second claim. 
To prove that there exists a unique closed $B$-orbit in $\pi^{-1}_{L,P}(O)$, we 
look more closely at the action on the fibers. 

Suppose that the orbit $O$ is of the form $O= B\cdot yP/P$ ($y\in W^P$). 
We already know from the proof of Theorem~\ref{T:main1} that 
$\pi_{L,P}^{-1}(B\cdot yP) \cong R_u(P)\times B\cdot yP$,
see~\ref{A:localdecomposition2}. 
The action of $B$ on the right hand side of this isomorphism is given by 
the diagonal action. The action of $B$ on the first factor in (\ref{A:localdecomposition2}), 
that is $R_u(P)$, is the twisted conjugation action,
see Remark~\ref{R:Bactiononradical}. 
Next, we will make use of the following general observation.
\begin{Remark}\label{R:general}
Let $H$ be a connected algebraic group acting on two irreducible 
varieties $Z_1$ and $Z_2$. Then a minimal dimensional $H$-orbit
in the diagonal action of $H$ on $Z_1\times Z_2$ is contained in 
$O_1\times O_2$, where $O_i$ ($i\in \{1,2\}$) is a minimal 
dimensional orbit in $Z_i$ ($i\in \{1,2\}$). 
Furthermore, if $H$ acts transitively on $Z_2$, then the minimal 
dimensional orbit of $H$ in $Z_1\times Z_2$ is of the form 
$O_1\times Z_2$, where $O_1$ is a minimal dimensional $H$-orbit in $Z_1$. 
\end{Remark}

Now we go back to our proof. 
In light of Remark~\ref{R:general}, it suffices to concentrate on 
the action of $B$ on the first factor. Furthermore, 
since we are interested in the dimension of the closed orbit with minimal dimension only, 
we ignore the twisting by $y$, so, we focus on the ordinary conjugation action of $B$ on $R_u(P)$. 
This action has a unique fixed point, that is the $B$-orbit of the identity element of $R_u(P)$.
Therefore, our proof is finished.
\end{proof}

\begin{Definition}\label{D:firstdef}
We propose to call any set of combinatorial objects which parametrize 
the $B$-orbits in $\pi^{-1}_{L,P}(O)$ the {\em sect} of $O$. 
\end{Definition}


We are ready to prove one of the main results that we announced 
in the introduction. It states that there are isomorphisms 
$$
A^*(G/L)\cong H^*(G/L,\Z) \cong S^{W_L}/(S_+)^W.
$$
\begin{proof}[Proof of Theorem~\ref{T:Intro1}.]
By Theorem~\ref{T:main1}, we have a cellular decomposition of $G/L$ 
that is given by the preimages under $\pi_{L,P}$ of the Borel orbits from $G/P$. 
Therefore, the Chow ring of $G/L$ is isomorphic to the cohomology 
ring of $G/L$, and furthermore, the closures of the affine cells 
form a generating set for $A^*(G/L)$. But their images generate
the isomorphic ring $A^*(G/P)$ which is known to be 
given by $S^{W_L}/(S_+)^W$. This finishes the proof.
\end{proof}

\section{Clans}\label{S:4}

We begin by recalling some of the background on clans.
We loosely follow the presentations of~\cite{Yamamoto} and~\cite{WyserThesis}.

\begin{Definition} 
Let $p$ and $q$ be two positive integers such that $p+q=n$. 
A \emph{ $(p,q)$-clan} is an ordered set of $n$ symbols 
$( c_1 \dots c_n)$ such that: 
\begin{enumerate}
\item Each symbol $c_i$ is either $+$, $-$ or a natural number.
\item If $c_i \in \N$ then there is a unique $c_j$, $i\neq j$ such that $c_i=c_j$.
\item The difference between the numbers of $+$ and $-$ symbols in the clan is equal to $p-q$. 
If $q>p$, then we have $q-p$ more minus signs than plus signs. 
\end{enumerate}
(Strictly speaking, such a sequence should be called a `$(p,q)$-preclan'
and the object that is defined by the next sentence should be called a $(p,q)$-clan.)
Clans are determined only up to equivalence based on the positions 
of matching pairs of natural numbers in the clan; 
the particular values of the natural numbers are not important.
For example, $(+ 1 2 1 2 -)$ and $(+ 1 7 1 7  -)$ 
are equivalent $(3,3)$-clans.
We denote the set of all $(p,q)$-clans by $\mc{C}(p,q)$.
\end{Definition}

Clans represent $\mbf{B}_n$-orbits in the flag variety $\mbf{Fl}_n$. 
We now describe how to obtain flags to represent the orbit corresponding to a given clan. 
Recall that a (full) flag $F_\bullet$ is a sequence of vector subspaces $(V_i)_{i=0}^n$ such that
\[ 
\{0\}= V_0 \subset V_1 \subset V_2 \subset\dots \subset V_n = \C^n 
\] 
and $\dim V_i=i$ for all $i$. 
We find it convenient to write 
\[
F_\bullet = \langle v_1, v_2, \dots, v_n  \rangle 
\] 
to indicate that $F_\bullet$ is the flag with 
$V_i= \mt{span} \{ v_1, \dots, v_i \}$ for all $1\leq i \leq n $. 
Also, let us fix a (standard) basis $\{e_i\}_{i=1}^n$ for $\C^n$.

\begin{Definition}
A \emph{signed $(p,q)$-clan} is a $(p,q)$-clan $\gamma= (c_1\cdots c_n)$ 
together with a choice of assignment of ``signature'' $+$ or $-$ to the 
individual members of each pair of matching natural numbers in $\gamma$, 
such that if one of the numbers in the pair is assigned $+$, then the other is assigned $-$. 
The \emph{default signed clan} associated to $\gamma$ is the one that gives 
$c_i$ a signature of $-$ whenever $c_i=c_j \in \N$ and $i< j$, and is denoted by $\tilde{\gamma}$.
\end{Definition}

We use subscripts to indicate signatures when writing down signed clans; 
for instance $( + 1_+ 2_- 1_- 2_+ - )$ is a signed $(3,3)$-clan, 
while $\tilde{\gamma} = (+ 1_- 2_- 1_+ 2_+ -)$ is the default signed clan of $\gamma=(+ 1 2 1 2 -)$.

\begin{Theorem}[see \cite{Yamamoto}, Theorem 2.2.14]\label{algom}
Given a clan $\gamma\in \mc{C}(p,q)$, assign signatures 
to the natural numbers appearing in $\gamma$ so as to obtain 
a signed $(p,q)$-clan. 
Now fix a permutation $\sigma \in S_n$ such that:
\begin{align*}
1 \leq \sigma(i) \leq p \textrm{\quad    if $c_i = +$ or $c_i\in \N$ with signature $+$}, \\
p+1 \leq \sigma(i) \leq n \textrm{ \quad   if $c_i= -$ or $c_i \in \N$ with signature $-$.}
\end{align*}
Next define a flag $F_\bullet =\langle v_1, v_2,\dots, v_n\rangle$ by taking
\begin{align*}
&v_i = e_{\sigma(i)} &&  \textrm{\quad if $c_i= \pm$,}  &\\
&v_i =e_{\sigma(i)} + e_{\sigma(j)} && 
\textrm{\quad if $c_i= c_j\in \N$ and $c_i$ has signature $+$,}&\\
&v_i =-e_{\sigma(i)} + e_{\sigma(j)} && 
\textrm{\quad if $c_i= c_j\in \N$ and $c_i$ has signature $-$}. & 
\end{align*}
The flag so obtained represents an $\mbf{L}_{p,q}$-orbit,
$\mbf{Q}_\gamma:= \mbf{L}_{p,q} \cdot F_\bullet$ in $\mbf{SL}_n/\mbf{B}_n$. 
Repeating this process for all $\gamma\in \mc{C}(p,q)$ 
gives the complete collection of distinct $(\mbf{L}_{p,q},\mbf{B}_n)$-double cosets
in $\mbf{SL}_n$.
\end{Theorem}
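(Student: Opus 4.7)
The plan is to establish the theorem in three logically separate steps: (i) the flag $F_\bullet$ produced by the recipe is well-defined modulo the $\mbf{L}_{p,q}$-action, independently of the signature assignment and of the choice of permutation $\sigma$; (ii) inequivalent clans produce flags in distinct $\mbf{L}_{p,q}$-orbits; and (iii) every $\mbf{L}_{p,q}$-orbit in $\mbf{Fl}_n$ is hit. Together these give the bijection $\mc{C}(p,q) \leftrightarrow \mbf{L}_{p,q}\backslash \mbf{Fl}_n$ asserted by the theorem.

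For (i), the permissible permutations $\sigma$ differ from one another by independent permutations of the indices labeled ``$+$'' inside $\{1,\dots,p\}$ and of those labeled ``$-$'' inside $\{p+1,\dots,n\}$; each such reordering is realized by a monomial element of $\mbf{L}_{p,q}$, so it modifies $F_\bullet$ only within its orbit. Reversing the signature convention on a matched pair $c_i=c_j\in \N$ multiplies $v_i$ and $v_j$ by $-1$, which is again an $\mbf{L}_{p,q}$-adjustment via a diagonal element. For (ii) and (iii), I would construct an explicit inverse by extracting a clan from a flag. Given $F_\bullet=(V_0\subset V_1\subset\cdots\subset V_n)$, set
\[
a_i = \dim(V_i\cap E^+),\qquad b_i = \dim(V_i\cap E^-),
\]
where $E^+=\mt{span}\{e_1,\dots,e_p\}$ and $E^-=\mt{span}\{e_{p+1},\dots,e_n\}$. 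Since $\mbf{L}_{p,q}$ stabilizes both $E^\pm$, the sequences $(a_i)$ and $(b_i)$ are orbit invariants. At step $i$, exactly one of three situations occurs: $a_i=a_{i-1}+1$ (record $c_i=+$), $b_i=b_{i-1}+1$ (record $c_i=-$), or both equalities $a_i=a_{i-1}$, $b_i=b_{i-1}$ hold, in which case a vector spanning $V_i/V_{i-1}$ has nonzero components in both summands. In this last ``mixed'' case, pair $i$ with the smallest earlier index $j<i$ at which the projection of $V_j$ to $E^+$ (equivalently to $E^-$) first acquired a direction that only now becomes trapped inside the intersection $V_i\cap E^\pm$; assign $c_i=c_j$ a common natural-number label. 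One then checks this extraction is $\mbf{L}_{p,q}$-invariant and recovers the original clan.

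The main obstacle will be showing that the matching rule in the mixed case genuinely recovers the index $j$ used in building $v_i=\pm e_{\sigma(i)}+e_{\sigma(j)}$, and more importantly that the triple consisting of $(a_i)$, $(b_i)$, and the matching data forms a \emph{complete} $\mbf{L}_{p,q}$-invariant of the flag. Direct verification requires bringing each flag in a given orbit to a normal form by successively applying block elements of $\mbf{L}_{p,q}$ to clear spurious components of each $v_i$, which is a careful but essentially linear-algebraic induction on $i$. An alternative closing move, which short-circuits the completeness check, is to invoke the finiteness of $\mbf{L}_{p,q}\backslash \mbf{Fl}_n$ (guaranteed by sphericity of $\mbf{L}_{p,q}$) together with an independent count of $(p,q)$-clans and the already-verified injectivity from the normal-form argument; a cardinality match then forces surjectivity and completes the proof.
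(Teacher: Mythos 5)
The paper itself offers no proof of this statement; it is quoted verbatim from Yamamoto's thesis (Theorem 2.2.14 of \cite{Yamamoto}), so there is no internal argument to compare against. Your overall strategy --- orbit invariants built from $a_i=\dim(V_i\cap E^+)$ and $b_i=\dim(V_i\cap E^-)$, a normal-form reduction, and a final cardinality count --- is indeed the standard skeleton of Yamamoto's and Matsuki--Oshima's proofs, so the plan is reasonable.

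However, the central step is wrong as written: your trichotomy at step $i$ is not exhaustive. Since $\Delta a_i,\Delta b_i\in\{0,1\}$, there are \emph{four} possibilities, and the case $(\Delta a_i,\Delta b_i)=(1,1)$ genuinely occurs (take $n=2$, $p=q=1$, $V_1=\C(e_1+e_2)$: then $a_1=b_1=0$ but $a_2=b_2=1$). The correct dictionary is $(1,0)\leftrightarrow +$, $(0,1)\leftrightarrow -$, $(0,0)\leftrightarrow$ the \emph{opening} of a natural-number pair, and $(1,1)\leftrightarrow$ the \emph{closing} of a pair. You conflate $(0,0)$ with the only ``mixed'' case, and then attach the matching data at the opening step rather than at the closing step, where it belongs (a closing at $i$ must be matched backward to an earlier opening $j<i$). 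Beyond this mislabelling, the matching rule you propose is not a well-defined invariant: $a_\bullet$ and $b_\bullet$ alone cannot recover the pairing, and one needs a two-index family of intersection/sum dimensions such as $\dim\bigl(V_i+(V_j\cap E^+)\bigr)$ (or one of Yamamoto's equivalent formulations) to nail down which opening each closing closes. Without the corrected case analysis and a precise two-index invariant, both injectivity and the normal-form induction you rely on in the final paragraph do not go through, and the cardinality fallback does not rescue the argument since it presupposes the very injectivity that is in question.
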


\begin{Example}
Let us consider $\gamma=(+ 1 2 1 2 -)$ again. 
We will use the default signed clan $\tilde{\gamma} = (+ 1_- 2_- 1_+ 2_+ -)$ for signature, 
and then we choose $\sigma=(25)(34)$ as our permutation (in cycle notation). 
Then the corresponding representing flag for $\mbf{Q}_\gamma$ is given by 
\[
\langle e_1, e_3-e_5, e_2- e_4, e_3+e_5, e_2+e_4, e_6 \rangle. 
\]
\end{Example}

\begin{Remark} 
Note that the possible choices of $\sigma$ for a 
given clan differ by an element of $S_p \x S_q$, which is the Weyl group of $\mbf{L}_{p,q}$. 
This is one way of understanding why all choices yield representatives of the same $\mbf{L}_{p,q}$-orbit.
\end{Remark}

\begin{Remark}\label{flagmatrix} 
We can convert between the flag and coset descriptions of points in $\mbf{Fl}_n $ as follows. 
Given $F_\bullet=\langle v_1,\dots,v_n\rangle$, 
then we take $\hat{g}$ as the matrix with $i$-th column equal to $v_i$. 
To get an element of $\mbf{SL}_n$, define $g=(\frac{1}{\det \hat g})^{\frac{1}{n}} \hat{g}$, 
for some choice of $n$-th root of $1/{\det \hat{g}}$, 
and thus we may associate the coset $g\mbf{B}_n$ in $\mbf{SL}_n/\mbf{B}_n$ to the flag 
$F_\bullet$ in $\mbf{Fl}_n$. 
Given the coset $g\mbf{B}_n$, by taking $V_i$ to be the span of the first $i$ columns of 
any coset representative, we likewise recover the flag $F_\bullet$.
\end{Remark}

We now proceed with our purpose of describing which 
$\mbf{L}_{p,q}$-orbits lie in the same $\mbf{P}_{p,q}$-orbit in $\mbf{Fl}_n$. 
In fact, by using symmetry, we will describe which $\mbf{B}_n$-orbits of $\mbf{SL}_n/\mbf{L}_{p,q}$ 
lie over which Schubert cells of $\mbf{SL}_n/\mbf{P}_{p,q}$ under the canonical projection.
Let us make an auxiliary definition. 

\begin{Definition} 
The \emph{base clan} of a $(p,q)$-clan $\gamma$ is the clan 
obtained by replacing signed natural numbers by their signature in 
the default signed clan $\tilde{\gamma}$.
\end{Definition}

Again, for example, the base clan of $( + 1 2 1 2- )$ is $( + - - + + -)$.  
We remark here that the $(p,q)$-clans which arise as base clans 
(those with only $+$'s and $-$'s as symbols) are the closed $\mbf{L}_{p,q}$-orbits.
In this notation, we have a specific but a critical result towards 
showing our second main theorem. 

\begin{Theorem}\label{T:clanorbit} 
Let $\mbf{Q}_\gamma$ and $\mbf{Q}_\tau$ be $\mbf{L}_{p,q}$-orbits in $\mbf{Fl}_n$ corresponding to 
$(p,q)$-clans $\gamma$ and $\tau$. 
Then $\mbf{Q}_\gamma$ and $\mbf{Q}_\tau$ lie in the same $\mbf{P}_{p,q}$-orbit of $\mbf{Fl}_n$ if and only if 
$\gamma$ and $\tau$ have the same base clan.
\end{Theorem}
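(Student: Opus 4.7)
The plan is to identify a complete $\mbf{P}_{p,q}$-orbit invariant for flags in $\mbf{Fl}_n$ and then to match this invariant with the base clan via a direct computation using Yamamoto's recipe from Theorem~\ref{algom}. Let $V_p^0 := \mt{span}\{e_1, \dots, e_p\}$, so that $\mbf{P}_{p,q}$ is precisely the stabilizer of $V_p^0$ in $\mbf{SL}_n$. To each flag $F_\bullet = (V_\bullet)$ I attach the $\pm$-sequence $\epsilon(F_\bullet) = (\epsilon_1, \dots, \epsilon_n)$ with $\epsilon_k = +$ exactly when $\dim(V_k \cap V_p^0) > \dim(V_{k-1} \cap V_p^0)$, and $\epsilon_k = -$ otherwise. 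The sequence $\epsilon(F_\bullet)$ carries exactly $p$ plus signs, so it is naturally a base clan.

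Since $\mbf{P}_{p,q}$ stabilizes $V_p^0$, the sequence $\epsilon(F_\bullet)$ is manifestly $\mbf{P}_{p,q}$-invariant. Conversely, the parabolic Bruhat decomposition (Theorem~\ref{T:ParabolicBruhat}) partitions $\mbf{SL}_n$ into $|W^{\mbf{P}_{p,q}}| = \binom{n}{p}$ double cosets of the form $\mbf{P}_{p,q} w \mbf{B}_n$, which is the exact number both of possible sequences $\epsilon$ and of base clans. Hence $\epsilon$ induces a bijection between $\mbf{P}_{p,q}$-orbits on $\mbf{Fl}_n$ and base clans, and the theorem reduces to showing that $\epsilon(F_\bullet)$ equals the base clan of $\gamma$ for any $F_\bullet$ representing $\mbf{Q}_\gamma$.

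This last identification is the heart of the argument. Passing to the quotient $\C^n / V_p^0$, one has $\dim(V_k \cap V_p^0) = k - \dim \overline{V}_k$, where $\overline{V}_k$ denotes the image of $V_k$ in $\C^n/V_p^0$; consequently $\epsilon_k = +$ iff $\overline{v}_k \in \overline{V}_{k-1}$. I then run through the four possibilities for $c_k$ prescribed by Yamamoto's formulas. If $c_k = +$, then $v_k \in V_p^0$ and $\overline{v}_k = 0$. If $c_k = -$, then $\overline{v}_k = \overline{e}_{\sigma(k)}$ with $\sigma(k) > p$. If $c_k$ is the first occurrence of a numbered pair (default sign $-$), then $\overline{v}_k = -\overline{e}_{\sigma(k)}$ with $\sigma(k) > p$. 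If $c_k$ is the second occurrence (default sign $+$), paired with some earlier index $i < k$, then $\overline{v}_k = \overline{e}_{\sigma(i)}$ is proportional to the already-seen vector $\overline{v}_i = -\overline{e}_{\sigma(i)}$. The first and fourth cases yield $\epsilon_k = +$, matching a base clan entry of $+$; the second and third yield $\epsilon_k = -$, matching a base clan entry of $-$. The main technical step is checking, in the second and third cases, that $\overline{e}_{\sigma(k)}$ is actually new, i.e.\ not already in $\overline{V}_{k-1}$. This follows from $\sigma$ being a bijection, together with the observation that the nonzero vectors among $\overline{v}_1, \ldots, \overline{v}_{k-1}$ are coordinate directions $\pm \overline{e}_{\sigma(i)}$ with $i < k$ and $\sigma(i) > p$, which are linearly independent from $\overline{e}_{\sigma(k)}$.
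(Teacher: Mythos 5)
Your proof is correct, and it takes a genuinely different route from the paper. The paper argues both directions directly at the level of representing matrices: for sufficiency it exhibits explicit parabolic elements $p^{s,t}$ that iteratively transform the default flag of the base clan $\tau$ into the default flag of $\gamma$ (one transformation per matched pair of numbers), and for necessity it uses the block structure of $\mbf{P}_{p,q}$ to argue that no parabolic element can carry $e_s$ ($s\leq p$) to a vector with nonzero $e_t$-component ($t>p$). You instead isolate a complete $\mbf{P}_{p,q}$-orbit invariant --- the dimension-jump sequence of $F_\bullet$ relative to the distinguished subspace $V_p^0 = \mathrm{span}\{e_1,\dots,e_p\}$, i.e.\ the relative position of $F_\bullet$ with respect to $V_p^0$ --- and then compute, case by case in Yamamoto's recipe, that this invariant equals the base clan of $\gamma$. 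This is cleaner and more conceptual: both directions fall out of one computation, and it makes visible the link to Schubert cells on the Grassmannian that the paper only establishes later in Proposition~\ref{projbase} and Theorem~\ref{decomp}. What the paper's approach buys is explicitness: it produces the actual group elements realizing the orbit equivalence, which is useful if one wants to track additional structure. One small expositional point: equal cardinalities alone do not yield the bijection between $\mbf{P}_{p,q}$-orbits and base clans; you also need surjectivity of the induced map, which is supplied by the subsequent computation (every base clan $\tau$ is attained as $\epsilon(F_\tau)$). Since you carry out that computation, the argument is complete --- just reorder the logic so the counting is invoked after surjectivity is established.
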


\begin{proof} 
First we show the sufficiency; 
if $\gamma$ has base clan $\tau$, 
then the representing flags for each $\mbf{L}_{p,q}$-orbit 
$\mbf{Q}_\gamma$ and $\mbf{Q}_\tau$ lie in the same $\mbf{P}_{p,q}$-orbit. 
Then all clans with base clan $\tau$ will lie in the same $\mbf{P}_{p,q}$-orbit. 
More precisely, we will exhibit parabolic group elements,  
which iteratively transform the representing flag for $\tau$ 
into the representing flag for $\gamma$.

Let $\gamma=(c_1\ldots c_n)$ and $\tau=(t_1\ldots t_n)$, and let 
$F_\gamma=\langle v_1,\dots, v_n \rangle$ and $F_\tau=\langle u_1, \dots, u_n \rangle$ 
be the corresponding flags constructed by Theorem~\ref{algom}, 
using $\tilde{\gamma}$ as the signed clan for $\gamma$ 
($\tau$ is already ``signed'' since it consists only of $+$'s and $-$'s). 
Additionally, we may also use the same permutation $\sigma$ to construct both flags, 
as each clan has the same signature. Then, for each pair of numbers $c_i=c_j$ with $i< j$, 
we have
\[
(v_i, v_j) = (e_s-e_t, e_s+ e_t) \quad \textrm{and} \quad (u_i,u_j) = (e_t, e_s) 
\]
for some $1\leq s \leq p$ and $p+1 \leq t \leq n$.

Let $p^{s,t}$ denote the matrix of the linear transformation defined by 
\[ 
e_t \longmapsto e_s-e_t, \quad e_i\longmapsto e_i \quad\text{ for } i\neq t.
\]
Note that $p^{s,t}$ is an element of the parabolic subgroup for suitable choices 
of $s$ and $t$; this can be verified by examining the block-matrix description of $\mbf{P}_{p,q}$.
Note also that the pairs of vectors
\[ 
(e_s-e_t, e_s+ e_t) \quad \textrm{and} \quad (e_s-e_t, e_s) 
\]
generate the same subspace, so it does not matter which one of the vectors 
$e_s+e_t$ or $e_s$ appears as $v_j$ in the flag $F_\gamma$.  
Hence, after we act on the flag $F_\tau$ 
by the appropriate element of the form $p^{s,t}$, 
one for each pair of natural numbers $c_i=c_j$ in $\gamma$, 
we obtain an equivalent presentation of $F_\gamma$. 
Thus $\mbf{Q}_\gamma$ is in the same $\mbf{P}_{p,q}$-orbit as $\mbf{Q}_\tau$. 

To show the converse, it suffices to show that orbits corresponding 
to distinct base clans $\tau$ and $\lambda$ lie in distinct $\mbf{P}_{p,q}$-orbits. 
Let $\tau=(t_1\cdots t_n)\neq\lambda=(l_1 \cdots l_n)$ and let 
$F_\tau=\langle u_1,\dots, u_n \rangle$ and $F_\lambda=\langle w_1,\dots, w_n \rangle$ 
be flags constructed to represent each orbit using Theorem~\ref{algom}. 
Now let $i$ be the least index such that $t_i \neq l_i$. 
Without loss of generality, we may assume that $t_i=-$ and $l_i=+$. 
Then we have $u_i=e_t$ for some $p+1\leq t \leq n$ and $w_i = e_s$ for some $1\leq s \leq p$. 
For these flags to be in the same $\mbf{P}_{p,q}$-orbit we would need to be able to carry $e_s$ 
to a vector with non-zero $e_t$ component via some $p\in \mbf{P}_{p,q}$. 
This would force a non-zero entry in the $(t,s)$-entry of the matrix of $p$, 
but since $p$ has a block-diagonal form with zero $(i,j)$-entry whenever $i>p$ and $j\leq p$, 
this is impossible. 
Thus, the $\mbf{L}_{p,q}$-orbits $\mbf{Q}_\tau$ and $\mbf{Q}_\lambda$ are in distinct $\mbf{P}_{p,q}$-orbits.

\end{proof}

\begin{Definition}\label{D:seconddef}
We call the collection of clans with base clan 
$\gamma$ the \emph{sect} of $\gamma$, denoted by $\Sigma_\gamma$. 
\end{Definition}

By abuse of terminology, we will also use this term (sect) to describe collections of 
any of the objects ($\mbf{B}_n$-orbits, $\mbf{L}_{p,q}$-orbits, double cosets, etc.) 
parameterized by $\Sigma_\gamma$. 
We will show below that the Definitions~\ref{D:seconddef}
and~\ref{D:firstdef} are concordant.
Let us also make another definition that will be useful in what follows in order to 
assign representing flags and matrices to clans in a standard way.

\begin{Definition} \label{defaults} 
Let $\gamma= (c_1\dots c_n)$ be a $(p,q)$-clan with default 
signed clan $\tilde{\gamma}$. 
Then let $S_{{\gamma}}=\{c_{s_1}, \dots, c_{s_k} \}$ 
denote the set of symbols such that $1\leq s_l \leq p$, 
the signature of $c_{s_l}$ in $\tilde{\gamma}$ is $-$, 
and ${s_l} < {s_{l+1}}$ for all $1\leq l \leq k-1$. 
Let $T_{{\gamma}}=\{c_{t_1},\dots, c_{t_k}\}$ 
be the set of symbols such that $p+1 \leq {t_l} \leq n$, 
the signature of $\tilde{\gamma}$ is $+$, and ${t_l} < {t_{l+1}}$ 
for all $1\leq l \leq k-1$. 
Now define the permutation $\sigma_{{\gamma}}$ by 
\[ 
\sigma_{{\gamma}}(i) = \begin{cases}
i \qquad \textrm{ if } c_i \not\in S_{{\gamma}} \cup T_{{\gamma}} \\
t_l \qquad \textrm{ if } i={s_l} \\
s_l \qquad \textrm{ if } i={t_l} .
\end{cases}
\]
We call $\sigma_\gamma$ the \emph{default permutation} 
associated to $\gamma$. 
We will call the flag obtained via Theorem~\ref{algom} by 
using the signatures of $\tilde{\gamma}$ and permutation 
$\sigma_\gamma$ the \emph{default flag} associated to $\gamma$. 
Finally, a matrix constructed as in Remark~\ref{flagmatrix} 
from the default flag of $\gamma$ will be called 
a \emph{default matrix} associated to $\gamma$; 
note that there is a choice of $n$-th root of unity involved in constructing such a matrix. 
\end{Definition}

From now on, we will reserve the symbols 
$F_\gamma$ and $g_\gamma$ respectively to mean the default 
flag and default matrix of a clan $\gamma$. 
To illustrate these definitions, we turn to our running example 
$\gamma=(+ 1 2 1 2  - )$. Then, $\sigma_\gamma=(2 4) (3 5)$, and
\[ 
F_\gamma= \langle e_1, e_2- e_4, e_3-e_5, e_2+e_4, e_3+e_5, e_6 \rangle. 
\]
Finally, 
\[
g_\gamma = (\frac{1}{4})^\frac{1}{6} \begin{pmatrix}
1 & 0 & 0 & 0 & 0 & 0 \\
0 & 1 & 0 & 1 & 0 & 0 \\
0 & 0 & 1 & 0 & 1 & 0 \\
0 &-1 & 0 & 1 & 0 & 0 \\
0 & 0 &-1 & 0 & 1 & 0 \\
0 & 0 & 0 & 0 & 0 & 1
\end{pmatrix}.
\]
\begin{Remark} \label{involution} If $\tau$ is a base clan, then $g_\tau$ is a scalar 
multiple of the matrix of a permutation which is an involution, 
where the multiple is either an $n$-th root or a $2n$-th root of unity, 
depending on whether the permutation is even or odd. 
This follows from that the determinant of a permutation matrix is the sign of the permutation, 
and that the reciprocal of a $k$-th root of unity is also a $k$-th root of unity.
\end{Remark}

Let us now make the identification of $\mbf{L}_{p,q}$-orbits in $\mbf{Fl}_n$ 
and $\mbf{B}_n$-orbits in $\mbf{SL}_n/\mbf{L}_{p,q}$ more explicit.  
First note that for any subgroups $H, K \subseteq G$, there is a natural 
bijection between the set of $(H,K)$ double cosets and the set of 
$(K,H)$ double cosets that is given by  
\begin{align*}
\varphi: HgK &\longrightarrow K g^{-1}H  \\
 hgk & \longmapsto kg^{-1}h \notag
\end{align*}
Thus, given a clan $\gamma$, we have the induced map 
\[
\mbf{Q}_\gamma =
\mbf{L}_{p,q} g_\gamma \mbf{B}_n/\mbf{B}_n \longrightarrow 
\mbf{B}_n g_\gamma^{-1}\mbf{L}_{p,q}/\mbf{L}_{p,q}=:R_\gamma 
\]
that takes the the $\mbf{L}_{p,q}$-orbit $\mbf{Q}_\gamma$ to a unique 
$\mbf{B}_n$-orbit in $\mbf{SL}_n/\mbf{L}_{p,q}$, 
which we denote by $R_\gamma$ since it depends only on the clan $\gamma$ via $g_\gamma$.

Now we return to Schubert cells, and $\mbf{Gr}(p, n)$. 
\begin{Remark} \label{planematrix} 
Similarly to Remark~\ref{flagmatrix}, we can move between points of $\mbf{Gr}(p, n)$ 
and cosets $g\mbf{P}_{p,q}$ as follows. Given $x\in \mbf{Gr}(p, n)$, 
as a vector subspace, $x$ is the span of linearly independent vectors $\{v_1,\dots ,v_p\}$. 
Let $W= V^\perp$, the orthogonal complement of $V$. 
Then $W$ is an $n-p=q$ dimensional subspace of $\C^n$ 
spanned by some linearly independent collection $\{v_{p+1},\dots ,v_n\}$. 
If we let $\tilde{g}$ be the matrix whose $i$-th column is $v_i$, 
and define $g=(\frac{1}{\det \tilde{g}})^\frac{1}{n}\tilde{g} \in \mbf{SL}_n$, 
then the coset $g\mbf{P}_{p,q}$ represents the $p$-plane $x$ in $\mbf{Gr}(p, n)$. 
We recover $x$ by taking the span of the first $p$ columns of any matrix in the coset $g\mbf{P}_{p,q}$.
\end{Remark}

Theorem~\ref{T:clanorbit} describes which $\mbf{L}_{p,q}$-orbits of $\mbf{Fl}_n$ 
lie in the same $\mbf{P}_{p,q}$-orbit, effectively giving a decomposition of the double cosets 
$\mbf{P}_{p,q}g_\tau \mbf{B}_n$, where $\tau$ is the default matrix associated to a base clan 
$\tau$ via Theorem~\ref{algom}. 
We are now ready to give a restatement of Theorem~\ref{T:clanorbit} in terms of the map 
$\pi_{\mbf{L}_{p,q},\mbf{P}_{p,q}} : \mbf{SL}_n/ \mbf{L}_{p,q} \to \mbf{SL}_n/\mbf{P}_{p,q}$.

\begin{Proposition}\label{projbase}
Let $R_\gamma$ and $R_\tau$ be $\mbf{B}_n$-orbits in 
$\mbf{SL}_n/\mbf{L}_{p,q}$ corresponding to $(p,q)$-clans 
$\gamma$ and $\tau$, and let 
\hbox{$\pi_{\mbf{L}_{p,q},\mbf{P}_{p,q}}: \mbf{SL}_n/\mbf{L}_{p,q} \to \mbf{SL}_n/\mbf{P}_{p,q}$} 
denote the canonical projection. 
Then $\pi_{\mbf{L}_{p,q},\mbf{P}_{p,q}}(R_\gamma)= \pi(R_\tau)$ if and only if 
$\gamma$ and $\tau$ have the same base clan (are in the same sect).
\end{Proposition}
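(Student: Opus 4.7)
The plan is to reduce the statement directly to Theorem~\ref{T:clanorbit} via the inversion involution $g \mapsto g^{-1}$ on $\mbf{SL}_n$, which swaps left and right cosets. Recall that, by construction, $R_\gamma = \mbf{B}_n g_\gamma^{-1}\mbf{L}_{p,q}/\mbf{L}_{p,q}$, so its image under $\pi_{\mbf{L}_{p,q},\mbf{P}_{p,q}}$ is simply the Schubert cell $\mbf{B}_n g_\gamma^{-1}\mbf{P}_{p,q}/\mbf{P}_{p,q}$ of $\mbf{SL}_n/\mbf{P}_{p,q}$. Hence the equality $\pi_{\mbf{L}_{p,q},\mbf{P}_{p,q}}(R_\gamma) = \pi_{\mbf{L}_{p,q},\mbf{P}_{p,q}}(R_\tau)$ is equivalent to the identity of double cosets $\mbf{B}_n g_\gamma^{-1}\mbf{P}_{p,q} = \mbf{B}_n g_\tau^{-1}\mbf{P}_{p,q}$ in $\mbf{SL}_n$.

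Next, I would apply the natural inversion bijection $\varphi$ (already recorded in the excerpt just before Remark~\ref{planematrix}) between $(\mbf{B}_n,\mbf{P}_{p,q})$-double cosets and $(\mbf{P}_{p,q},\mbf{B}_n)$-double cosets. Since $\varphi$ is a bijection, the equality above is equivalent to $\mbf{P}_{p,q} g_\gamma \mbf{B}_n = \mbf{P}_{p,q} g_\tau \mbf{B}_n$. But this is precisely the condition that $\mbf{Q}_\gamma = \mbf{L}_{p,q} g_\gamma \mbf{B}_n/\mbf{B}_n$ and $\mbf{Q}_\tau = \mbf{L}_{p,q} g_\tau \mbf{B}_n/\mbf{B}_n$ lie in the same $\mbf{P}_{p,q}$-orbit of $\mbf{Fl}_n$, since the $\mbf{P}_{p,q}$-orbit of any representative of $\mbf{Q}_\gamma$ is exactly $\mbf{P}_{p,q} g_\gamma \mbf{B}_n /\mbf{B}_n$.

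Finally I would invoke Theorem~\ref{T:clanorbit} directly: it asserts that $\mbf{Q}_\gamma$ and $\mbf{Q}_\tau$ lie in the same $\mbf{P}_{p,q}$-orbit of $\mbf{Fl}_n$ if and only if $\gamma$ and $\tau$ share the same base clan, i.e., are in the same sect in the sense of Definition~\ref{D:seconddef}. Chaining the three equivalences gives the proposition.

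There is no real obstacle here; the argument is essentially a bookkeeping exercise that uses the symmetry between the two-sided quotients $\mbf{L}_{p,q}\backslash \mbf{SL}_n / \mbf{B}_n$ and $\mbf{B}_n \backslash \mbf{SL}_n / \mbf{L}_{p,q}$ together with the compatibility of the projection $\pi_{\mbf{L}_{p,q},\mbf{P}_{p,q}}$ with the enlargement $\mbf{L}_{p,q}\subset \mbf{P}_{p,q}$ on the right factor. The only subtle point worth pointing out explicitly in the write-up is that $\pi_{\mbf{L}_{p,q},\mbf{P}_{p,q}}$ really does send the $\mbf{B}_n$-orbit $R_\gamma$ to the Schubert cell $\mbf{B}_n g_\gamma^{-1}\mbf{P}_{p,q}/\mbf{P}_{p,q}$ (and not to some finer set), which is immediate from $\pi_{\mbf{L}_{p,q},\mbf{P}_{p,q}}$ being $\mbf{B}_n$-equivariant and surjective on $\mbf{L}_{p,q}$-cosets lying in a fixed $\mbf{P}_{p,q}$-coset.
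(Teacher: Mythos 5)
Your proposal is correct and follows essentially the same route as the paper's proof: both express $\pi_{\mbf{L}_{p,q},\mbf{P}_{p,q}}(R_\gamma)$ as the double coset $\mbf{B}_n g_\gamma^{-1}\mbf{P}_{p,q}/\mbf{P}_{p,q}$, flip sides by the inversion bijection to reach $\mbf{P}_{p,q} g_\gamma \mbf{B}_n = \mbf{P}_{p,q} g_\tau \mbf{B}_n$, and then invoke Theorem~\ref{T:clanorbit}. The only difference is stylistic: you spell out the intermediate step identifying the $(\mbf{P}_{p,q},\mbf{B}_n)$-double coset equality with the condition that $\mbf{Q}_\gamma$ and $\mbf{Q}_\tau$ lie in a common $\mbf{P}_{p,q}$-orbit, which the paper leaves implicit.
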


\begin{proof}
In view of the preceding discussion, we can represent the $\mbf{B}_n$-orbits as
\[ 
R_\gamma= \mbf{B}_n g_\gamma^{-1} \mbf{L}_{p,q}/\mbf{L}_{p,q} \qquad 
\text{and} \qquad R_\tau
= \mbf{B}_n g_\tau^{-1} \mbf{L}_{p,q}/\mbf{L}_{p,q} 
\]
with projections 
\[ 
\pi_{\mbf{L}_{p,q},\mbf{P}_{p,q}}(R_\gamma)
= \mbf{B}_n g_\gamma^{-1} \mbf{P}_{p,q}/\mbf{P}_{p,q} \qquad 
\text{and} \qquad \pi_{\mbf{L}_{p,q},\mbf{P}_{p,q}}(R_\tau)
= \mbf{B}_n g_\tau^{-1} \mbf{P}_{p,q}/\mbf{P}_{p,q}  .
\]
Then $\pi_{\mbf{L}_{p,q},\mbf{P}_{p,q}}(R_\gamma)=\pi_{\mbf{L}_{p,q},\mbf{P}_{p,q}}(R_\tau)$ 
if and only if 
$\mbf{B}_n g_\gamma^{-1} \mbf{P}_{p,q}  = \mbf{B}_n g_\tau^{-1} \mbf{P}_{p,q}$ 
which holds if and only if 
$\mbf{P}_{p,q} g_\gamma  \mbf{B}_n  = \mbf{P}_{p,q} g_\tau \mbf{B}_n$. 
From Theorem~\ref{T:clanorbit}, we know this happens 
if and only if $\gamma$ and $\tau$ have the same base clan. 
\end{proof}

Since $\pi_{\mbf{L}_{p,q},\mbf{P}_{p,q}}(R_\gamma)$ is a $\mbf{B}_n$-orbit in 
$\mbf{SL}_n/\mbf{P}_{p,q}$, that is a Schubert cell, 
it remains only to describe which Schubert cell it is. 
As in~\cite[Section 1]{Brion-Lectures}, a Schubert cell $C_I$ is determined 
by a choice of $p$ standard basis vectors, \hbox{$I= \{e_{i_1},\dots, e_{i_p}\}$}. 
Thus, there are $\binom{n}{p}$ cells.  We can bijectively associate base clans to sets $I$ 
by defining $\gamma_I=(c_1 \dots c_n)$ by
\begin{equation}\label{baseclanI} c_i =\begin{cases}
+ \quad \text{ if } e_i \in I, \\
- \quad \text{ if } e_i \notin I .
\end{cases}\end{equation}
Note that under this assignment, after building the default flag, 
$F_{\gamma_I}=\langle v_1, \dots, v_n\rangle$, 
it is easy to check that $\{v_1,\dots,v_p\}=I$. 
So by Remark~\ref{planematrix}, $C_I = \mbf{B}_n g_{\gamma_I} \mbf{P}_{p,q} /\mbf{P}_{p,q}$. 
Finally, we have our promised result.

\begin{Theorem}\label{decomp} 
Let $C_I$ be a Schubert cell of $\mbf{SL}_n/\mbf{P}_{p,q}$, 
and let 
\hbox{$\pi_{\mbf{L}_{p,q},\mbf{P}_{p,q}}: \mbf{SL}_n/\mbf{L}_{p,q} \to \mbf{SL}_n/\mbf{P}_{p,q}$} 
be the canonical projection. 
We associate to $I$ a base clan $\gamma_I$ as in equation \ref{baseclanI}, 
and we denote the sect of $\gamma_I$ by $\Sigma_I$. 
If $R_\gamma$ denotes the $\mbf{B}_n$-orbit of 
$\mbf{SL}_n/\mbf{L}_{p,q}$ associated to $\gamma$, then
\begin{equation}
\pi_{\mbf{L}_{p,q},\mbf{P}_{p,q}}^{-1}(C_I) 
= \bigsqcup_{\gamma \in \Sigma_I} R_\gamma.
\end{equation}
\end{Theorem}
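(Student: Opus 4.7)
The plan is to package the statement as a direct consequence of Proposition~\ref{projbase} together with the $\mbf{B}_n$-equivariance of the projection $\pi_{\mbf{L}_{p,q},\mbf{P}_{p,q}}$. Since the hard combinatorial content (matching up base clans with $\mbf{P}_{p,q}$-orbits, and hence Schubert cells under the double-coset flip) has already been done in Theorem~\ref{T:clanorbit} and Proposition~\ref{projbase}, the remaining work is essentially bookkeeping.

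First I would set up the partition side. The group $\mbf{B}_n$ acts on $\mbf{SL}_n/\mbf{L}_{p,q}$ and, by Yamamoto's parametrization combined with the double-coset flip $\mbf{L}_{p,q} g_\gamma \mbf{B}_n \leftrightarrow \mbf{B}_n g_\gamma^{-1} \mbf{L}_{p,q}$ established in the discussion preceding Remark~\ref{planematrix}, every $\mbf{B}_n$-orbit on $\mbf{SL}_n/\mbf{L}_{p,q}$ is of the form $R_\gamma$ for a unique $(p,q)$-clan $\gamma$. In particular, $\mbf{SL}_n/\mbf{L}_{p,q} = \bigsqcup_{\gamma \in \mc{C}(p,q)} R_\gamma$. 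Intersecting with $\pi_{\mbf{L}_{p,q},\mbf{P}_{p,q}}^{-1}(C_I)$ therefore gives a disjoint decomposition, and the only task is to pin down the index set.

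Next I would use equivariance. The projection $\pi_{\mbf{L}_{p,q},\mbf{P}_{p,q}}$ is $\mbf{B}_n$-equivariant, so $\pi_{\mbf{L}_{p,q},\mbf{P}_{p,q}}^{-1}(C_I)$ is $\mbf{B}_n$-stable, and for each clan $\gamma$ the image $\pi_{\mbf{L}_{p,q},\mbf{P}_{p,q}}(R_\gamma)$ is a single $\mbf{B}_n$-orbit in $\mbf{SL}_n/\mbf{P}_{p,q}$, i.e.\ a Schubert cell. Consequently $R_\gamma \subseteq \pi_{\mbf{L}_{p,q},\mbf{P}_{p,q}}^{-1}(C_I)$ if and only if $\pi_{\mbf{L}_{p,q},\mbf{P}_{p,q}}(R_\gamma) = C_I$.

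Finally I would apply Proposition~\ref{projbase} with $\tau = \gamma_I$. Using Remark~\ref{planematrix} and the observation following \eqref{baseclanI} that $C_I = \mbf{B}_n g_{\gamma_I} \mbf{P}_{p,q}/\mbf{P}_{p,q} = \pi_{\mbf{L}_{p,q},\mbf{P}_{p,q}}(R_{\gamma_I})$, the condition $\pi_{\mbf{L}_{p,q},\mbf{P}_{p,q}}(R_\gamma) = C_I$ becomes $\pi_{\mbf{L}_{p,q},\mbf{P}_{p,q}}(R_\gamma) = \pi_{\mbf{L}_{p,q},\mbf{P}_{p,q}}(R_{\gamma_I})$, which by Proposition~\ref{projbase} is equivalent to $\gamma$ and $\gamma_I$ having the same base clan, i.e.\ $\gamma \in \Sigma_I$. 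Taking the disjoint union over such $\gamma$ yields the claimed equality.

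I do not anticipate a real obstacle here: all of the heavy lifting is already in Proposition~\ref{projbase}, and the only care needed is to verify that $C_I$ does coincide with the $\mbf{B}_n$-orbit through $g_{\gamma_I}\mbf{P}_{p,q}$ under the identification of $\mbf{Gr}(p,n)$ with $\mbf{SL}_n/\mbf{P}_{p,q}$ from Remark~\ref{planematrix}; this was essentially observed right after \eqref{baseclanI} by noting that the first $p$ vectors of the default flag $F_{\gamma_I}$ are precisely the elements of $I$.
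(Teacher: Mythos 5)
Your overall strategy matches the paper's proof exactly: partition $\pi_{\mbf{L}_{p,q},\mbf{P}_{p,q}}^{-1}(C_I)$ into $\mbf{B}_n$-orbits via equivariance, then reduce to showing $\pi_{\mbf{L}_{p,q},\mbf{P}_{p,q}}(R_{\gamma_I}) = C_I$ so that Proposition~\ref{projbase} finishes the job. However, the final step, which you treat as already handled ``right after \eqref{baseclanI},'' contains a gap. By the paper's convention $R_\gamma := \mbf{B}_n g_\gamma^{-1}\mbf{L}_{p,q}/\mbf{L}_{p,q}$, so
$\pi_{\mbf{L}_{p,q},\mbf{P}_{p,q}}(R_{\gamma_I}) = \mbf{B}_n g_{\gamma_I}^{-1}\mbf{P}_{p,q}/\mbf{P}_{p,q}$,
with the inverse, whereas the remark after \eqref{baseclanI} only gives $C_I = \mbf{B}_n g_{\gamma_I}\mbf{P}_{p,q}/\mbf{P}_{p,q}$, without the inverse. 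Your chain of equalities quietly identifies these two double cosets, but nothing you cited establishes that they coincide.

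Closing this requires Remark~\ref{involution}: because $\gamma_I$ is a base clan, $g_{\gamma_I} = \zeta^{-1} u$ where $u$ is the permutation matrix of an involution (so $u^{-1} = u$) and $\zeta$ is an $n$-th or $2n$-th root of unity. Hence $g_{\gamma_I}^{-1} = \zeta u = \zeta^2 g_{\gamma_I} = (\zeta^2 I_n)\, g_{\gamma_I}$, and $\zeta^2 I_n$ lies in the diagonal torus $\mbf{T} \subseteq \mbf{B}_n$ since $\zeta^2$ is an $n$-th root of unity. Therefore $\mbf{B}_n g_{\gamma_I}^{-1}\mbf{P}_{p,q} = \mbf{B}_n g_{\gamma_I}\mbf{P}_{p,q}$, which gives the needed equality $\pi_{\mbf{L}_{p,q},\mbf{P}_{p,q}}(R_{\gamma_I}) = C_I$. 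This is precisely the computation the paper carries out, and it is not optional: without it the appeal to Proposition~\ref{projbase} lands on the wrong Schubert cell.
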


\begin{proof} 
The pre-image of $C_I$ decomposes as a disjoint union of $\mbf{B}_n$-orbits 
$R_\gamma$ as a consequence of the fact that $\pi_{\mbf{L}_{p,q},\mbf{P}_{p,q}}$ 
is $\mbf{SL}_n$-equivariant, 
so in particular $\pi$ is $\mbf{B}_n$-equivariant. It remains to determine for which 
$\gamma$ we have $\pi_{\mbf{L}_{p,q},\mbf{P}_{p,q}}(R_\gamma)=C_I$. 
Proposition~\ref{projbase} tells 
us that $\mbf{B}_n$-orbits $R_\gamma$ and $R_\tau$ project into the same Schubert cell 
if and only if they are members of the same sect. 
Thus we only have to prove that the $\mbf{B}_n$-orbit $R_{\gamma_I}$ 
projects to the Schubert cell $C_I$.

We know that $\pi_{\mbf{L}_{p,q},\mbf{P}_{p,q}}(R_{\gamma_I})
=\mbf{B}_n g_{\gamma_I}^{-1} \mbf{P}_{p,q} /\mbf{P}_{p,q}$. 
From remark \ref{involution}, we know that 
\[
g_{\gamma_I} = \zeta^{-1}   u 
\]
where $\zeta$ is an $n$-th or $2n$-th root of unity and $u$ is the permutation matrix of an involution. 
This implies that 
\[
g_{\gamma_I}^{-1}= \zeta u = \zeta^2 g_{\gamma_I}= \zeta^2 I_n \cdot g_{\gamma_I}.
\]
Since $\zeta^2$ is an $n$-th root of unity, $\zeta^2 I_n$  is in the maximal torus of 
diagonal elements $\mbf{T} \subseteq  \mbf{SL}_n$. 
Since $\mbf{T}$ is contained in $\mbf{B}_n$, 
we have
\[
\pi_{\mbf{L}_{p,q},\mbf{P}_{p,q}}(R_{\gamma_I})
= \mbf{B}_n g_{\gamma_I}^{-1} \mbf{P}_{p,q} /\mbf{P}_{p,q} 
= \mbf{B}_n (\zeta^2 I_n \cdot g_{\gamma_I}) \mbf{P}_{p,q} /\mbf{P}_{p,q} 
= \mbf{B}_n g_{\gamma_I} \mbf{P}_{p,q} /\mbf{P}_{p,q} 
= C_I. 
\]
\end{proof}

As a final remark, this theorem indicates a bijection between Schubert cells 
$C_I$ and sects $\Sigma_I$; every sect $\Sigma_I$ is nonempty because 
it contains the clan $\gamma_I$, and every base clan is of the form $\gamma_I$ 
for some collection of $p$ standard basis vectors $I$, 
as prescribed by the association of (\ref{baseclanI}). 
We may continue to refer to sects indexed either by a particular member $\gamma$, 
or by the subset $I$ associated to the common base clan $\gamma_I$.

\section{The Bruhat Order}\label{S:5}

The Bruhat order for type AIII symmetric space, that is our variety of polarizations,
was first fully described in \cite{Wyser16}.  
After describing the specifics of the Bruhat order for $G/L$, we illustrate an example, 
and examine the interaction of the ordering with the decomposition given by Theorem \ref{decomp}.

Let $X_I=\overline{C_I}=\overline{\mbf{B}_n g_{\gamma_I} \mbf{P}_{p,q}/\mbf{P}_{p,q}}$ 
be a Schubert variety in $\mbf{Gr}(p,n)=\mbf{SL}_{n}/\mbf{P}_{p,q}$. 
Define a partial order on Schubert cells by \hbox{$C_I \leq C_J$} if and only if 
$X_I \subseteq  X_J$. 
Let $I= \{e_{i_1}, \dots, e_{i_p} \}$ and $J= \{e_{j_1}, \dots, e_{j_p}\}$ be ordered 
so that $i_k \leq i_{k+1}$ and $j_k\leq j_{k+1}$ for all $k$ from $1$ to $p-1$. 
Then we say that {${I \leq J}$ if and only if $i_k \leq j_k$ for all $k$ from $1$ to $p$}. 
From this, one has 
\[
X_J= \bigcup_{I\leq J} C_I ,
\]
so $C_I \leq C_J$ in the Bruhat order if and only if $I\leq J$ in the order described above,
see~\cite[Section 1.1.4]{Brion-Lectures}.

As Schubert cells in a Grassmannian are parametrized by lattice paths, 
the Bruhat order also translates to a nice description in lattice paths as follows. 
The collection of sets of the form $I=\{e_{i_1},\dots, e_{i_p}\}$ are in bijection 
with the paths from the origin to the point $(p,q)$ possible by 
taking only ``north" and ``east" steps ($N$ and $E$) as follows. 
Starting at the origin, move one unit east if $e_1\in I$, 
move one unit north if not. From this new point point, move one unit east if $e_2\in I$, 
move one unit north if not, etc. After $n$ steps, such a path necessarily terminates at the point $(p,q)$.

Having drawn paths for $I$ and $J$ on the same grid, 
the condition $I \leq J$ is equivalent to the path of $I$ 
lying weakly beneath that of $J$.
Furthermore, the dimension of $C_I$ is equal to the number of 
boxes beneath the path (shaded in Figures~\ref{fig:p2q1} and~\ref{fig:p2q2}). 
There is a unique closed Schubert cell which is zero dimensional.  
The Bruhat order on Schubert cells is drawn in lattice path form at the bottom of 
Figures~\ref{fig:p2q1} and~\ref{fig:p2q2} for some small examples.

 One has no trouble recovering the $p$-plane which 
 represents the corresponding Borel orbit, by taking the span 
 of the standard basis vectors indexed by the positions of the 
 eastward steps in the path sequence. 
 We also remark here that the path sequence of $I$ is evident in $\gamma_I$ 
 as defined by~(\ref{baseclanI}); north and east steps correspond to $-$ and $+$, respectively.

Let $G$ be an algebraic group and let $H$ be a closed subgroup in $G$. 
Let $B$ be a Borel subgroup in $G$. 
By analogy, one defines a partial order on Borel orbits in 
$G/H$ given by the containment of their closures. 
This also induces a partial on all other corresponding families of objects.
In our situation we have partial orders on the following sets of objects: 
\begin{itemize}
\item double cosets of $\mbf{B}_n \backslash \mbf{SL}_n / \mbf{L}_{p,q}$.
\item double cosets of $\mbf{L}_{p,q} \backslash \mbf{SL}_n / \mbf{B}_n$. 
\item $\mbf{L}_{p,q}$-orbits $\mbf{Q}_\gamma$ in $\mbf{Fl}_n$.
\item $(p,q)$-clans $\gamma$ in $\mc{C}(p,q)$.
\item signed $(p,q)$-involutions $\pi$ in $I_{p,q}^\pm$ (see \cite{CanUgurlu1}). 

\end{itemize}

We will refer to all of these (isomorphic) partial orders as 
``the Bruhat order,'' exchanging freely between the various descriptions of the underlying sets. 

Wyser gives the following concise description of the Bruhat 
order on clans corresponding to $\mbf{GL}_p\times \mbf{GL}_q$ 
orbits in the full flag variety of $\mbf{GL}_n$ in \cite{Wyser16}. 
Since the flag varieties of $\mbf{SL}_n$ and $\mbf{GL}_n$ are the same, 
and a matrix of $\mbf{GL}_p\times \mbf{GL}_q$ is a matrix of 
$S(\mbf{GL}_p\times \mbf{GL}_q)$ times a scalar multiple of the 
identity matrix (which fixes a flag), the orbits and their structure are the same.

Let $\gamma$ be a $(p,q)$-clan that is given by $\gamma=(c_1 \ldots c_n)$. 
For $i\in \{1,\dots, n\}$, let $\gamma ( i ; +)$ denote the number of plus signs 
and pairs of equal natural numbers among the symbols $c_1 \ldots c_i$. 
Similarly, let $\gamma(i ; -)$ denote the number of minus 
signs and pairs of equal natural numbers among the symbols $c_1 \ldots c_i$. 
Finally let, $\gamma(i, j)$ be the number of pairs of natural numbers 
$c_s=c_t$ with $s\leq i < j < t$.  To illustrate this on an example, 
we consider $\gamma=(+1212-)$. Then, as $i$ varies in $\{1,\dots,6\}$ we 
obtain the sequence $\gamma(i; +)= 1, 1 ,1 , 2, 3, 3 $, and $\gamma(3,4) = 1$.

\begin{Theorem}[\cite{Wyser16}, Theorem 1.2]\label{T:Wyser}
Let $ \gamma$, $\tau$ be $(p,q)$-clans. 
Then $\gamma \leq \tau$ if and only if the three following inequalities hold for all $i$, $j$. 
\begin{enumerate}
\item $\gamma(i ; +) \geq \tau ( i ; +)$;
\item $\gamma(i ; - ) \geq \tau (i ; - )$;
\item $\gamma (i ; j) \leq \tau( i ; j) $.
\end{enumerate}
\end{Theorem}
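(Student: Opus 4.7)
The plan is to identify the three combinatorial statistics in Wyser's theorem as $\mathbf{L}_{p,q}$-invariant dimension functions on $\mathbf{Fl}_n$, then to derive the forward direction from standard semicontinuity and the converse from a covering-relation induction.

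Set $E_+ = \mathrm{span}(e_1, \ldots, e_p)$ and $E_- = \mathrm{span}(e_{p+1}, \ldots, e_n)$; these are the subspaces stabilized by $\mathbf{L}_{p,q}$. Reading off the construction of $F_\gamma$ in Theorem~\ref{algom}, I would first verify that for any flag $F_\bullet = (V_i)$ in $\mathbf{Q}_\gamma$,
$$
\gamma(i;+) = \dim(V_i \cap E_+), \qquad \gamma(i;-) = \dim(V_i \cap E_-),
$$
and that the third statistic admits the intrinsic description
$$
\gamma(i;j) = \dim V_i \;-\; \dim\bigl( V_i \cap \bigl( (V_j \cap E_+) + (V_j \cap E_-) \bigr) \bigr),
$$
which counts exactly those matched bonds of $\gamma$ whose opening vector lies in $V_i$ but whose closing vector lies outside $V_j$. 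Each of these numbers is manifestly $\mathbf{L}_{p,q}$-invariant, and a comparison with Yamamoto's parametrization (Theorem~\ref{algom}) shows that together the three sequences form a complete set of orbit invariants. The forward implication of the theorem then reduces to standard semicontinuity: the maps $F_\bullet \mapsto \dim(V_i \cap E_\pm)$ are upper semicontinuous on $\mathbf{Fl}_n$, so $\mathbf{Q}_\gamma \subseteq \overline{\mathbf{Q}_\tau}$ forces $\gamma(i;\pm) \geq \tau(i;\pm)$, while $\gamma(i;j)$ is lower semicontinuous by an analogous incidence-variety argument applied to the intersection appearing in its intrinsic formula, yielding~(3).

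For the converse I would argue by induction on $\dim \mathbf{Q}_\tau - \dim \mathbf{Q}_\gamma$: given $\gamma < \tau$ satisfying (1)--(3), the task is to exhibit a clan $\tau'$ that is covered by $\tau$ in the candidate order with $\gamma \leq \tau'$. The key step is a classification of the elementary covering moves on clans --- swapping two adjacent $\pm$-signs, merging an adjacent $+-$ or $-+$ into a matched natural-number pair, nesting or uncrossing two existing pairs, and the mixed cases where a sign interacts with one leg of an existing pair --- together with, for each move, an explicit one-parameter subgroup degeneration of flags inside $\overline{\mathbf{Q}_\tau}$ realizing the specialization $\tau \leadsto \tau'$. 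This parallels the Richardson--Springer and Incitti analyses of covering relations for involutions, with extra cases needed to accommodate the mixed sign/numerical structure of clans.

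The main obstacle is precisely this converse case analysis. Verifying that each covering move drops exactly one unit in exactly one of the inequalities (1)--(3) while keeping the others under control, and then producing the corresponding geometric degeneration, is substantial; the bookkeeping for the joint statistic $\gamma(i;j)$ under moves that visibly touch only $\pm$-signs is the most delicate part, since its variation involves a second-order interaction between $V_i$, $V_j$, and the two invariant subspaces $E_\pm$.
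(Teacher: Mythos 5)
The paper does not actually prove this theorem: it is imported wholesale from Wyser's work (\cite{Wyser16}, Theorem~1.2), and no argument is given in the text. So there is no ``paper's own proof'' to compare against, only the citation. Evaluated on its own terms, your proposal is a reasonable roadmap but it is a sketch with a genuine, self-acknowledged hole in it, not a proof.

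On the forward direction, your identification of the statistics with intersection dimensions is correct (one can verify $\gamma(i;+)=\dim(V_i\cap E_+)$, $\gamma(i;-)=\dim(V_i\cap E_-)$, and your formula for $\gamma(i;j)$ against the running example $\gamma=(+1212-)$, $\gamma(3,4)=1$). But the phrase ``lower semicontinuous by an analogous incidence-variety argument'' glosses over the one nontrivial point: the subspace $W_j=(V_j\cap E_+)+(V_j\cap E_-)$ does \emph{not} vary continuously with the flag (its dimension jumps up on closed subsets), so $\dim(V_i\cap W_j)$ is not an intersection of two continuously varying subspaces and its upper semicontinuity is not a formal consequence of the usual incidence construction. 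It does hold, but you need the reformulation $W_j=\{v:\pi_{E_+}(v)\in V_j \text{ and } \pi_{E_-}(v)\in V_j\}$, which rewrites the containment $S\subseteq W_j$ as a pair of closed incidence conditions on $(V_\bullet,S)$ and makes the argument go through. Without that observation the semicontinuity claim is an assertion, not an argument.

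The real gap is the converse, and you say so yourself. Two issues compound there. First, the induction scheme as stated is close to circular: you want to find $\tau'$ ``covered by $\tau$ in the candidate order,'' but the candidate order is the closure order and knowing its covers is essentially the content being proved. The standard way out is to split the work into (a) a purely combinatorial piece --- the set of clans satisfying (1)--(3) relative to $\tau$ is precisely the order ideal generated by certain elementary moves --- and (b) a geometric piece --- each elementary move is realized by a one-parameter degeneration inside $\overline{\mathbf{Q}_\tau}$. Second, even granting that structure, the list of elementary moves and the verification that each drops the statistics by exactly one unit in exactly one place (especially for moves touching only $\pm$-signs but still shifting some $\gamma(i;j)$) is precisely the ``substantial'' bookkeeping you flag and do not carry out. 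Until that case analysis and the accompanying degenerations are written down, this is a plan for a proof rather than a proof; it is consistent with how Wyser actually argues (via Richardson--Springer-style reductions and covering relations), but it does not yet establish the theorem.
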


To illustrate this order on clans, we draw order 
diagrams for the cases $p=2$, $q=1$, and $p=q=2$, 
giving the same color to all clans whose $\mbf{B}_n$-orbits in $\mbf{SL}_n / \mbf{L}_{p,q}$ 
lie over the same Schubert cell, according to Theorem~\ref{decomp}.
Note that the Bruhat order on clans ($\mbf{B}_n$-orbits in $\mbf{SL}_n / \mbf{L}_{p,q}$) reflects the 
Bruhat order on the Schubert cells ($\mbf{B}_n$-orbits in $\mbf{SL}_n / \mbf{P}_{p,q}$) down below 
in the sense that if that if $\pi(R_\gamma) \leq \pi(R_\tau)$ 
then $\gamma \leq \omega$  for some $\omega$ in the same sect as $\tau$.

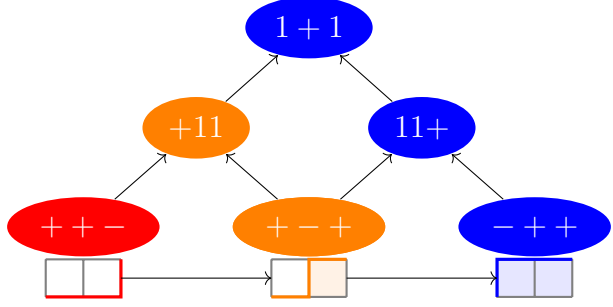
\begin{figure}[h] \caption{Inclusion order on
Borel orbit closures in $\mbf{SL}_3 / \mbf{L}_{2,1}$.}\label{fig:p2q1} 
\centering{\begin{tikzpicture}[
    big/.style={ellipse, draw=none, fill=blue,
        text centered, anchor=north, text=white},
    mid/.style={ellipse, draw=none, fill=orange, 
        text centered, anchor=north, text=white},
    small/.style={ellipse, draw=none, fill=red, 
        text centered, anchor=north, text=white},
    level distance=0.5cm, growth parent anchor=south
]
\node (big0) [big] {$1+1$} [<-] [sibling distance=3cm]
        child{ 
            node (mid0) [mid] {$+11$}
            child{
                node (small0) [small] {$++-$}
                }
                child{
                	node (mid1) [mid] {$+-+$}
		}
	      }
	 child{ 	
	 	node (big1) [big]  {11+} 
		child{ node (mid1) [mid] {$+-+$}
			}
		child{ node (big2) [big] {$-++$}
			}
		}	   ;
\fill[orange!10] (0, -4) -- (0,-3.5) -- (0.5, -3.5) -- (0.5, -4);
\fill[blue!10] (2.5, -4) -- (2.5,-3.5) -- (3.5, -3.5) -- (3.5, -4);

\draw[step=.5cm,gray,thick] 
(-3.5,-4) grid (-2.5,-3.5) ;
\draw[step=.5cm,gray,thick] 
(-0.5,-4) grid (0.5,-3.5) ;
\draw[step=.5cm,gray,thick] 
(2.49,-4) grid (3.5,-3.5) ;

\draw[color=red, very thick] (-3.5,-4) -- (-2.5, -4) -- (-2.5,-3.5) ;
\draw[color=orange, very thick] (-0.5,-4) -- (0, -4) -- (0,-3.5) -- (0.5, -3.5)  ;
\draw[color=blue, very thick] (2.5,-4) -- (2.5, -3.5) -- (3.5,-3.5) ;

\draw[->] (-2.5,-3.75) -- (-0.5, -3.75); 
\draw[->] (0.5,-3.75) -- (2.5, -3.75);

\node at (0,-4.5) {Bruhat order on Schubert varieties in $\mbf{Gr}(2, 3)$.};
\end{tikzpicture}}
\end{figure}

\begin{figure}[h] 
\centering
\caption{Inclusion order on
Borel orbit closures in $\mbf{SL}_4/\mbf{L}_{2,2}$.}\label{fig:p2q2}
\begin{tikzpicture}[
    teal/.style={ellipse, minimum width=2cm, draw=none, fill=teal,
        text centered, anchor=north, text=white},
    purp/.style={ellipse, minimum width=2cm, draw=none, fill=violet, 
        text centered, anchor=north, text=white},
    blue/.style={ellipse, minimum width=2cm, draw=none, fill=blue,
        text centered, anchor=north, text=white},
    mage/.style={ellipse, minimum width=2cm, draw=none, fill=magenta, 
        text centered, anchor=north, text=white},
    oran/.style={ellipse, minimum width=2cm, draw=none, fill=orange, 
        text centered, anchor=north, text=white},
    red/.style={ellipse, minimum width=2cm, draw=none, fill=red, 
        text centered, anchor=north, text=white},
    level distance=0.5cm, growth parent anchor=south
]
\node (teal0) at (0,10) [teal] {$1221$}  ;
\node (purp0) at (-3,8) [purp] {$1+-1$}  ;
\node (teal1) at (0,8) [teal] {$1212$}  ;
\node (teal2) at (3,8) [teal] {$1-+1$}  ;
\node (mage0) at (-5,6) [mage] {$+1-1$}  ;
\node (blue0) at (-2.5,6) [blue] {$1+1-$}  ;
\node (purp1) at (0,6) [purp] {$1122$}  ;
\node (teal3) at (2.5,6) [teal] {$1-1+$}  ;
\node (teal4) at (5,6) [teal] {$-1+1$}  ;
\node (oran0) at (-6.25,4) [oran] {$+11-$}  ;
\node (mage1) at (-3.75,4) [mage] {$+-11$}  ;
\node (blue1) at (-1.25,4) [blue] {$11+-$}  ;
\node (purp2) at (1.25,4) [purp] {$11-+$}  ;
\node (purp3) at (3.75,4) [purp] {$-+11$}  ;
\node (teal5) at (6.25,4) [teal] {$-11+$}  ;
\node (red0) at (-6.25,2) [red] {$++--$}  ;
\node (oran1) at (-3.75,2) [oran] {$+-+-$}  ;
\node (mage2) at (-1.25,2) [mage] {$+--+$}  ;
\node (blue2) at (1.25,2) [blue] {$-++-$}  ;
\node (purp4) at (3.75,2) [purp] {$-+-+$}  ;
\node (teal6) at (6.25,2) [teal] {$--++$}  ;

\draw [->] (purp0) edge (teal0) (teal1) edge (teal0) (teal2) edge (teal0)

(mage0) edge (purp0) (blue0) edge (purp0) (purp1) edge (purp0)
(mage0) edge (teal1) (blue0) edge (teal1) (purp1) edge (teal1) (teal3) edge (teal1) (teal4) edge (teal1)
(purp1) edge (teal2) (teal3) edge (teal2) (teal4) edge (teal2)

(oran0) edge (mage0) (mage1) edge (mage0) 
(oran0) edge (blue0) (blue1) edge (blue0)
(mage1) edge (purp1) (blue1) edge (purp1) (purp2) edge (purp1) (purp3) edge (purp1)
(purp2) edge (teal3) (teal5) edge (teal3) 
(purp3) edge (teal4) (teal5) edge (teal4) 

(red0) edge (oran0) (oran1) edge (oran0)
(oran1) edge (mage1) (mage2) edge (mage1)   
(oran1) edge (blue1) (blue2) edge (blue1)  
(mage2) edge (purp2) (purp4) edge (purp2)
(blue2) edge (purp3) (purp4) edge (purp3)
(purp4) edge (teal5) (teal6) edge (teal5)    
;

\fill[orange!10, xshift=8pt, yshift=-10pt] (-4, 0) -- (-4,0.5) -- (-3.5, 0.5) -- (-3.5, 0);
\fill[magenta!10, xshift=8pt, yshift=5pt] (-1.5, 0) -- (-1.5,1) -- (-1, 1) -- (-1, 0);
\fill[blue!10, xshift=8pt, yshift=-25pt] (0.5, 0.5) rectangle (1.5,0) ;
\fill[violet!10, xshift=8pt, yshift=-10pt] (3, 0) -- (3,0.5) -- (3.5, 0.5) -- (3.5, 1) -- (4,1) -- (4,0);
\fill[teal!10, xshift=8pt, yshift=-10pt] (5.5, 1) rectangle (6.5, 0 ) ;
\draw[xshift=8pt, yshift=-10pt, step=.5cm,gray,thick] 
(-7,0) grid (-6,1) ;
\draw[xshift=8pt, yshift=-10pt, step=.5cm,gray,thick] 
(-4.5,0) grid (-3.5,1) ;
\draw[xshift=8pt, yshift=5pt, step=.5cm,gray,thick] 
(-2,0) grid (-1,1) ;
\draw[xshift=8pt, yshift=-25pt, step=.5cm,gray,thick] 
(0.5,0) grid (1.5,1) ;
\draw[xshift=8pt, yshift=-10pt, step=.5cm,gray,thick] 
(2.99,0) grid (4,1) ;
\draw[xshift=8pt, yshift=-10pt, step=.5cm,gray,thick] 
(5.49,0) grid (6.5,1) ;
\draw[color=red, xshift=8pt, yshift=-10pt, very thick] (-7,0) -- (-6, 0) -- (-6,1) ;
\draw[color=orange, xshift=8pt, yshift=-10pt, very thick] (-4.5,0) -- (-4, 0) -- (-4,0.5) -- (-3.5, 0.5) -- (-3.5,1) ;
\draw[color=magenta, xshift=8pt, yshift=5pt, very thick] (-2,0) -- (-1.5, 0) -- (-1.5,1) -- ( -1,1) ;
\draw[color=blue, xshift=8pt, yshift=-25pt, very thick] (0.5,0) -- (0.5, 0.5) -- (1.5,0.5) -- (1.5,1) ;
\draw[color=violet, xshift=8pt, yshift=-10pt, very thick] (3,0) -- (3, 0.5) -- (3.5,0.5) -- (3.5, 1) -- (4,1) ;
\draw[color=teal, xshift=8pt, yshift=-10pt, very thick] (5.5,0) -- (5.5,1) -- (6.5,1) ;

\draw[->, xshift=8pt, yshift=-10pt] (-6,0.5) -- (-4.5, 0.5); 
\draw[->, xshift=8pt, yshift=-10pt] (-3.5,0.5) -- (-2, 1); 
\draw[->, xshift=8pt, yshift=5pt] (-1,0.5) -- (3, 0.1); 
\draw[->, xshift=8pt, yshift=-10pt] (-3.5,0.5) -- (0.5, -0.1); 
\draw[->, xshift=8pt, yshift=-25pt] (1.5,0.5) -- (3, 1); 
\draw[->, xshift=8pt, yshift=-10pt] (4,0.5) -- (5.5, 0.5); 
\node at (0,-1.5) {Bruhat order on Schubert cells in $\mbf{Gr}(2, 4)$.};
\end{tikzpicture}
\end{figure}
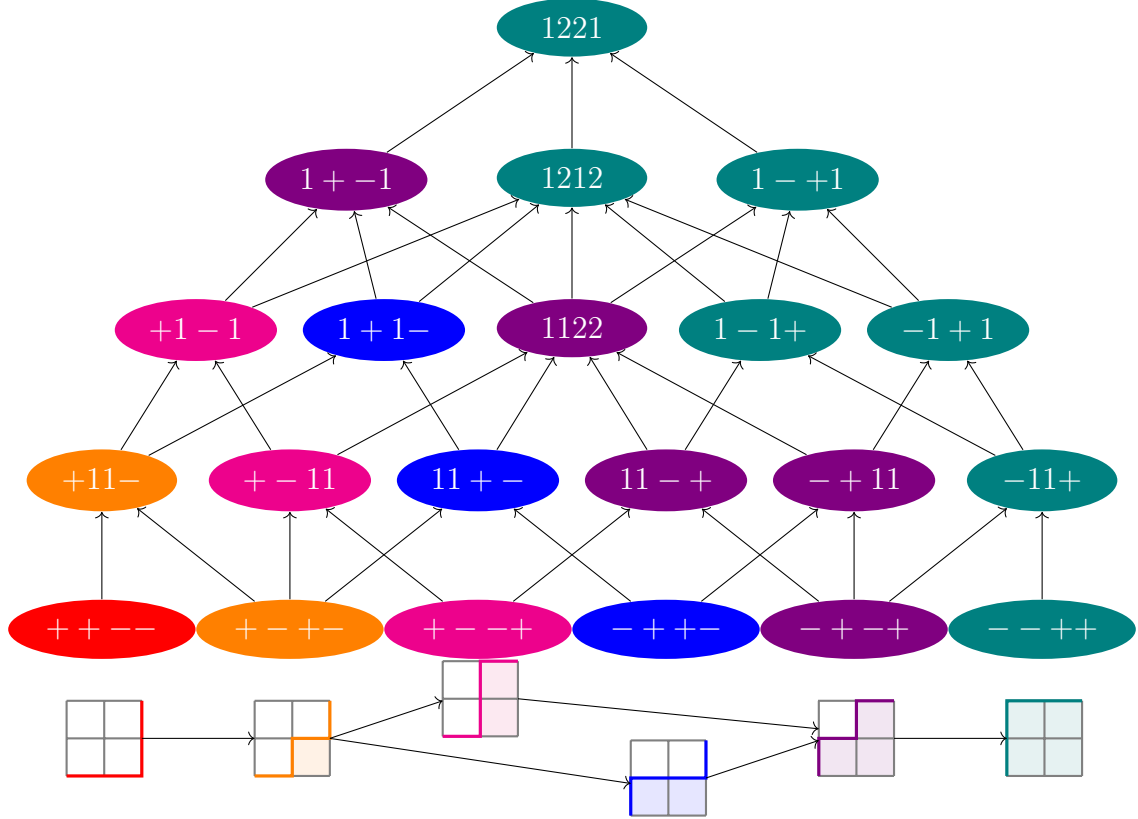

Note also that if $p\leq p'$ and $q\leq q'$ so that 
$p+q=n\leq n' =p'+q'$, then the Bruhat order on $(p,q)$-clans 
embeds into the order on $(p',q')$-clans by the map that takes 
\[
\gamma=(c_1\cdots c_n)\longmapsto\gamma'
=(\overbrace{+ \cdots +}^{p'-p}  c_1 \cdots c_n \overbrace{- \cdots -}^{q'-q} ).
\]
This embedding preserves sects and their internal order relations, 
as indicated by the red, orange and blue clans in Figure~\ref{fig:p2q2}.

As a final topic for this section, we remark upon the 
bijection of $(p,q)$-clans and weighted Delannoy paths described in \cite{CanUgurlu1} 
in the equivalent terminology of signed $(p,q)$-involutions.
As the sect $\Sigma_I$ corresponds to the lattice path obtained from $I$, 
one expects that the weighted Delannoy paths associated to all members 
of the sect will have features in common. 
One also hopes that the Bruhat order on clans can be described 
in terms of paths as it can for Schubert cells. 
This is true, but subject to some qualifications.

\begin{Remark} 
We list some properties relating the weighted Delannoy 
path of a clan to the Bruhat order and the sect decomposition. 
  
\begin{enumerate}

\item The weighted Delannoy path associated to $\gamma_I$ 
is the same as the lattice path of $I$. 

\item If $\gamma \leq \tau$ with $\tau\in \Sigma_\gamma$, 
then the weighted Delannoy path associated to $\tau$ lies weakly 
beneath that of $\tau$. Further, if these paths have weighted edges in common, 
then the weight on the edge for $\tau$ will be less than or equal 
to the weight on the edge for $\gamma$.

\item Clans from distinct sects may give the same 
underlying~\emph{unweighted} Delannoy path.

\end{enumerate}
\end{Remark}

See Figure 6.2 of \cite{CanUgurlu1} for illustration of the above properties. 
One can, of course, given the weighted Delannoy path of a clan 
$\gamma \in \Sigma_I$, reconstruct the lattice path of $I$. 
This is achieved by chasing through the associations

\[
\text{weighted Delannoy path} \leftrightarrow \text{signed } (p,q)\text{-involution} 
\leftrightarrow (p,q)\text{-clan} \rightarrow \text{base clan} \leftrightarrow \text{lattice path}.
 \]

We summarize this process in terms of path manipulations 
(as given by strings of moves $N$ or $E$) in the following algorithm. 
Note that the string of a weighted Delannoy path may additionally 
contain symbols of the form $(D, w)$, where $D$ indicates a diagonal step, 
and $w$ indicates the weight of the step.
\\

\textbf{Algorithm:}
\\

Let $W= K_1\dots K_r$ be a weighted $(p,q)$ Delannoy path (\cite{CanUgurlu1}, 
Definition 1.16) corresponding to a clan $\gamma\in\Sigma_I$. 
Construct a lattice path $L=M_1\cdots M_n$ from the origin to the point 
$(p,q)$ by the following process. 
The output $L$ will be the lattice path of the set $I$.
{\sffamily
\begin{enumerate}
\item \ [initialize]\\
 $L = \{ \} $.
\item \ [loop]\\
 for $(1\leq i \leq R)$:\\
\quad if $K_i = N$: \\
\quad \quad append $N$ to $L$ \\
\quad if $K_i = E$: \\
\quad \quad append $E$ to $L$\\
\quad else if $K_i=(D, w)$:\\
\quad \quad insert step $N$ at position $w$ \\
\quad \quad append $E$ to $L$\\
return $L$
\end{enumerate}
}
 

For our next result, without loss of generality we assume that $p\geq q$,
and we set
\begin{align}\label{A:r}
r:= \frac{ n-(p-q) }{2}.
\end{align}
In addition, we will denote by $\gamma_{I_0}$ the base clan of the big sect $\mt{Dense}(p,q)$,
so that, $I_0$ corresponds to the lattice path with $q$ north steps followed by $p$ east steps. 
\begin{Proposition}\label{P:orderideal}
For all positive integers $p,q$, and $r$ as in the previous paragraph, the following 
statements hold.
\begin{enumerate}
\item The big sect $\mt{Dense}(p,q)$ is a maximal upper order ideal in $\mc{C}(p,q)$. 
\item The unique minimal
element of $\mt{Dense}(p,q)$ is given by $\gamma_{I_0}$.
\item The unique maximal element of $\mt{Dense}(p,q)$ is given by 
$$
\gamma_{max}:= (12\ldots (r-1) r + + \ldots + + r (r-1)\ldots 21).
$$
\end{enumerate}
\end{Proposition}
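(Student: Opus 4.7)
The plan is to deduce all three assertions from Wyser's combinatorial characterization of the Bruhat order (Theorem~\ref{T:Wyser}), by computing the statistics $\gamma(i;\pm)$ and $\gamma(i,j)$ on the distinguished clans $\gamma_{I_0}$ and $\gamma_{max}$ and comparing them with those of an arbitrary $\tau \in \mt{Dense}(p,q)$. The key structural observation is that the default-signing rule, together with the prescription that $\tau$ has base clan $\gamma_{I_0}=(\underbrace{-\cdots -}_{q}\underbrace{+\cdots +}_{p})$, forces every pair of natural numbers in $\tau$ to have its first occurrence in $\{1,\ldots,q\}$ and its second in $\{q+1,\ldots,n\}$; equivalently, positions $1..q$ of $\tau$ contain only minuses and first occurrences, while positions $q+1..n$ contain only pluses and second occurrences. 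An immediate corollary is the uniform identity $\tau(i;+)=\max(0,i-q)$, valid for every $\tau\in\mt{Dense}(p,q)$ and every $i$.

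For Part (2), I would note $\gamma_{I_0}(i;+)=\max(0,i-q)$, $\gamma_{I_0}(i;-)=\min(i,q)$, and $\gamma_{I_0}(i,j)=0$. Wyser's three inequalities against an arbitrary $\tau\in\mt{Dense}(p,q)$ then read: equality on $(\cdot;+)$ by the corollary; $\tau(i;-)\leq\min(i,q)$ because the total counts of minuses in $1..q$ and of completed pairs add to at most $q$; and $\tau(i,j)\geq 0$ trivially. Hence $\gamma_{I_0}\leq\tau$. For Part (3), I would use the nested pair structure $(1,n),(2,n-1),\ldots,(q,p+1)$ of $\gamma_{max}$ to compute $\gamma_{max}(i;-)=\max(0,i-p)$ and $\gamma_{max}(i,j)=\min(i,q,n-j)$. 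For any $\tau\in\mt{Dense}(p,q)$ with pair set $\{(s_\ell,t_\ell)\}_{\ell=1}^{k}$, the distinctness of the $s_\ell$ in $\{1,\ldots,q\}$ and of the $t_\ell$ in $\{q+1,\ldots,n\}$ would give
\begin{align*}
\tau(i,j)\leq\min\bigl(|\{\ell:s_\ell\leq i\}|,|\{\ell:t_\ell>j\}|\bigr)\leq\min(i,q,n-j),
\end{align*}
while a short argument splitting on whether $k\geq n-i$ would deliver $\tau(i;-)\geq\max(0,i-p)$. Combined with the equality on $(\cdot;+)$, Wyser then yields $\tau\leq\gamma_{max}$.

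For Part (1), I would identify $\mt{Dense}(p,q)$ with the principal filter $\{\tau\in\mc{C}(p,q):\gamma_{I_0}\leq\tau\}$, which is tautologically an upper order ideal. The $\subseteq$ direction is Part (2); for the reverse, the constraint $\tau(q;+)\leq\gamma_{I_0}(q;+)=0$ forces positions $1..q$ of $\tau$ to contain neither pluses nor completed pairs, and combining the universal identity $\tau(n;+)=p$ with the fact that each increment of $\tau(\cdot;+)$ is at most $1$ forces every increment over $q+1..n$ to equal $1$, so each such position is a plus or a second occurrence. This recovers the base clan $\gamma_{I_0}$, placing $\tau$ in $\mt{Dense}(p,q)$. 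The maximality assertion then follows by observing that any other sect $\Sigma_I$ with $I\neq I_0$ fails to be an upper order ideal, since the overall maximum $\gamma_{max}$ of $\mc{C}(p,q)$ lies in $\mt{Dense}(p,q)$ but not in $\Sigma_I$.

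The main obstacle I anticipate is the pointwise bound on $\tau(i,j)$ in Part (3): uniformly bounding the number of pairs satisfying both $s_\ell\leq i$ and $t_\ell>j$ over all matchings between the paired positions on the two sides of the divider, and verifying that the nested pairing of $\gamma_{max}$ is simultaneously sharp in both constraints, will require a careful case analysis across the four regions determined by $i\lessgtr q$ and $j\lessgtr q$.
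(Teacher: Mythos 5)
Your proof is correct and takes a genuinely different route from the paper. The paper's argument is largely geometric: for Part (2) it invokes Proposition~\ref{C:minmax1} (which produced a unique closed $B$-orbit in each fiber via the conjugation action on $R_u(P)$) together with the fact that base clans are minimal; for Part (3) it simply refers to an external computation in \cite{CanUgurlu1}; and for the upper-order-ideal property it uses $\mbf{B}_n$-equivariance of $\pi_{\mbf{L}_{p,q},\mbf{P}_{p,q}}$ and the fact that the closure of a $\mbf{B}_n$-orbit lying over a proper Schubert cell $C_J$ cannot cover a fiber over the dense cell $C_{I_0}$. Your proof, by contrast, is fully self-contained and combinatorial: you derive all three claims from Wyser's criterion (Theorem~\ref{T:Wyser}) by explicitly computing the statistics $\gamma(i;\pm)$ and $\gamma(i,j)$, using the key structural fact that membership in $\mt{Dense}(p,q)$ forces every first occurrence into positions $1,\dots,q$ and every second occurrence and plus into positions $q+1,\dots,n$ (giving the uniform identity $\tau(i;+)=\max(0,i-q)$). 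I checked your computations of $\gamma_{I_0}(i;\pm)$, $\gamma_{\max}(i;\pm)$, $\gamma_{\max}(i,j)=\min(i,q,n-j)$, the lower bound $\tau(i;-)\geq\max(0,i-p)$ via the case split on $k$ versus $n-i$, and the upper bound $\tau(i,j)\leq\min(i,q,n-j)$ from the distinctness of the $s_\ell$ and $t_\ell$; all are correct. Your approach also gives the sharper intrinsic statement that $\mt{Dense}(p,q)$ is exactly the principal filter generated by $\gamma_{I_0}$, whereas the paper reaches this only implicitly by combining parts (1) and (2). The one thing I would flag is the closing ``maximality'' step: observing that no other sect is an upper order ideal does not, by itself, establish that $\mt{Dense}(p,q)$ is maximal among all proper upper order ideals of $\mc{C}(p,q)$ (the maximal proper filters of a finite poset are of the form $\mc{C}(p,q)\setminus\{m\}$ with $m$ minimal, and there are $\binom{n}{p}$ minimal base clans). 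This imprecision is, however, shared by the paper's own argument, which asserts maximality from the fact that $\mt{Dense}(p,q)$ contains both a minimal element and the unique maximum; you should not read it as a defect unique to your approach, but you should be aware that what is really being proved (in both proofs) is the principal-filter property, not maximality in the strict lattice-theoretic sense.
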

\begin{proof}
We already know from Proposition~\ref{C:minmax1} that 
each sect has a unique maximal and a unique minimal element. 
Since base clans are the minimal elements of the Bruhat order,
$\gamma_{I_0}$ is the minimal element of $\mt{Dense}(p,q)$.
To see that $\gamma_{max}$ is the maximal element of 
$\mt{Dense}(p,q)$, hence, of $\mc{C}(p,q)$ we refer to~\cite[Section 7.2]{CanUgurlu1},
where the maximal element of $\mc{C}(p,q)$ is computed by using the 
signed $(p,q)$-involution notation, which is not hard to translate to clan
terminology. 

Next, we prove our claim about the order ideal property. 
Let $\gamma$ be a clan from $\mt{Dense}(p,q)= \Sigma_{I_0}$, and let $\tau$ 
be a clan from the sect $\Sigma_J$, so that $J\lneq I_0$. 
Towards a contradiction we assume that $\tau \geq \gamma$.
Then the closure of the Borel orbit corresponding to $\tau$ contains the 
Borel orbit corresponding to $\gamma$. 
Since the projection $\pi_{\mbf{L}_{p,q},\mbf{P}_{p,q}}$ is $\mbf{B}_n$-equivariant, 
this would imply that the closure of the Borel orbit of $\tau$ contains 
the dense $\mbf{B}_n$-orbit $C_{I_0}$ of $\mbf{Gr}(p,n)$. But this is not possible 
since the sect of $\tau$ is not equal to $\Sigma_{I_0}$. This contradiction shows
that $\mt{Dense}(p,q)= \Sigma_{I_0}$ is an upper order ideal in $\mc{C}(p,q)$. 
In particular, since it contains a minimal element and the unique maximal element of $\mc{C}(p,q)$,
$\mt{Dense}(p,q)$ is a maximal upper order ideal.
This finishes the proof. 
\end{proof}

\begin{Remark}
There is an algorithm for finding the {\em leader},
that is the maximal element, of a sect in $\mc{C}(p,q)$. 
We are not including it here for the brevity of our paper.
\end{Remark}

\section{Proof of Theorem~\ref{T:main2}}\label{S:6}

The Theorem~\ref{T:main2} states that 
the big sect $\mt{Dense}(p,q)$ is a maximal upper order ideal
in the Bruhat order on clans $\mc{C}(p,q)$, and if $p=q$,
then $\mt{Dense}(p,q)$ is isomorphic to the rook monoid $R_p$. 
We already proved the first claim in Proposition~\ref{P:orderideal}, so,
we will proceed with the assumption that $p=q$, hence $n=2p$. 

First, we construct a bijection between 
$\mt{Dense}(p,q)$ and the rook monoid $R_p$. 
Let $\gamma=(c_1\ldots c_n)$ be a $(p,p)$-clan. 
We will denote by $x_\gamma =(x_{i,j})_{i,j=1}^n$ the rook matrix 
that corresponds to $\gamma$. 
Since $\gamma$ is from $\mt{Dense}(p,q)$ we know that the base 
clan of $\gamma$ has $-$ signs in the first $p$ entries, and it has $+$ 
signs in the second $p$ entries. In particular, if $c_i$ is a 
number, where $1\leq i \leq p$, then the same number appears
as $c_j$ in $\gamma$ for some $j$ such that $p+1 \leq j \leq n$. 
Thus we define 
\begin{align}\label{A:themap}
\gamma \longmapsto x_\gamma := (x_{r,s})_{r,s=1}^p, \ \text{ where } x_{r,s} := 
\begin{cases}
1 & \text{ if $c_{r+p}$ is a number and $c_{r+p}= c_{s}$;}\\
0 & \text{ otherwise.} 
\end{cases}
\end{align}
It is clear from the construction of $x_\gamma$ that the map $\gamma \mapsto x_\gamma$ 
is a bijection. Furthermore, $x_\gamma \in R_p$. 
\begin{Remark}\label{R:fromrook}
For each rook matrix $y$ in $R_p$, there exists a unique involution $\tilde{y}$ in $S_n$
that is obtained by placing $y$ in the lower-left $p\x p$ block, placing its transpose 
$y^\top$ to the upper-right $p\x p$ block, and by adding 1's along the permissible 
diagonal entries. 
\end{Remark}

Before showing that the map $\gamma \mapsto x_\gamma$ is a poset 
isomorphism between $\mt{Dense}(p,q)$ and $R_p$, we 
will show that $x_\gamma$ is a part of an involution in $S_n$. 
To this end, let us denote by $\tilde{\sigma}_\gamma$ the underlying 
signed $(p,p)$-involution of $\gamma$, and let us denote by 
$\sigma_\gamma$ the involution that is obtained from $\tilde{\sigma}_\gamma$ 
by removing the signs of its fixed points. We call $\sigma_\gamma$
the {\em underlying involution of $\gamma$}.
We claim that $x_\gamma$ is actually the lower-left $p\x p$ submatrix 
of $\sigma_\gamma$. 
Indeed, the 2-cycles in $\tilde{\sigma}_\gamma$ are given by 
the transpositions $(i,j)$, where $c_i = c_j$ and $c_i$ is a number,
see~\cite[Lemmas 2.3 and 2.5]{CanUgurlu2}.
Furthermore, since $\gamma \in \mt{Dense}(p,q)$, we know that 
$1\leq i \leq p$ and $p+1 \leq j \leq n$. 
When we represent $\sigma_\gamma $ by an $n\x n$ permutation matrix, 
the transposition $(i,j)$ introduces a 1 at the $(i,j)$-th 
position as well as a 1 at the $(j,i)$-th position. 
This shows that the lower-left corner of the matrix $M_\gamma$ of $\sigma_\gamma $ 
is exactly the same as the rook matrix $x_c$ that we defined in (\ref{A:themap}).
We will illustrate this by an example. 
\begin{Example}
Let $\gamma=(-1212+)$ be an element from $\mt{Dense}(3,3)$. 
Then we see that the cycle decomposition of 
the underlying involution of $\gamma$ is given by 
$\sigma_\gamma = (2,4)(3,5)(1)(6)$. In one-line notation
this permutation is equal to $145236$, and its matrix is given 
by 
\begin{align}\label{A:thematrixof1}
M_{(-1212+)}= 
\begin{bmatrix}
1 & 0 & 0 & 0 & 0 & 0 \\
0 & 0 & 0 & 1 & 0 & 0 \\
0 & 0 & 0 & 0 & 1 & 0 \\
0 & 1 & 0 & 0 & 0 & 0 \\
0 & 0 & 1 & 0 & 0 & 0 \\
0 & 0 & 0 & 0 & 0 & 1 
\end{bmatrix}
\end{align}
On the other hand, according to (\ref{A:themap}), 
the rook matrix $x_\gamma$ associated with $\gamma=(-1212+)$ is 
\begin{align}\label{A:thematrixof2}
x_{(-1212+)}=
\begin{bmatrix}
0 & 1 & 0 \\
0 & 0 & 1 \\
0 & 0 & 0 
\end{bmatrix}
\end{align}
Clearly the matrix in (\ref{A:thematrixof2}) appears in the lower-left $3\x 3$ submatrix
of the $6\x 6$ matrix in (\ref{A:thematrixof1}).

\end{Example}

Next, we proceed to prove that the map $\gamma \mapsto x_\gamma$ is order preserving. 
A corollary of Theorem~\ref{T:Wyser}, see~\cite[Corollary 2.7]{Wyser16}, states that  
if $ \gamma$, $\tau$ are two $(p,q)$-clans, then 
$\gamma \leq \tau$ if and only if the three following inequalities hold:
\begin{enumerate}
\item $\gamma(i ; +) \geq \tau ( i ; +)$ and $\gamma(i ; - ) \geq \tau (i ; - )$ for all $i\in \{1,\dots, n\}$; 
\item $\sigma_\gamma \leq \sigma_\tau$ in the Bruhat order on involutions. 
\end{enumerate}

For every $(p,p)$-clan in $\mt{Dense}(p,q)$, $-$'s can only occur among the first $p$ 
symbols, and $+$'s can only occur among the last $p$ symbols. 
Therefore, as $\gamma$ runs in the set $\mt{Dense}(p,q)$ the change 
in the statistics $\gamma(i ; +)$ and $\gamma(i ; - )$ depend only on 
how many natural numbers appear in $(c_1\ldots c_i)$. 
Passing to the matrix $M_\gamma$ of the underlying involution, these
statistics essentially translate to rank conditions on the suitable 
lower-left submatrices of $M_\gamma$.
But since the Bruhat order on involutions is also given by the rank-control matrices,
see~\cite{BagnoCherniavsky}, we see that 
the Bruhat order on the underlying involutions of $\gamma \in \mt{Dense}(p,q)$ 
is enough to determine the Bruhat order on the sect $\mt{Dense}(p,q)$.
In other words, we have 
\begin{align*}
\gamma \leq \tau \iff \sigma_\gamma \leq \sigma_\tau \ \text{ for $\gamma,\tau \in \mt{Dense}(p,q)$}.
\end{align*}
Now, since $x_\gamma$ and $x_\tau$ are the lower-left $p\x p$ submatrices
of $\sigma_\gamma$ and $\sigma_\tau$, respectively, we see that 
\begin{align}\label{A:implication}
\sigma_\gamma \leq \sigma_\tau \implies x_\gamma \leq x_\tau \ \text{ for $\gamma,\tau \in \mt{Dense}(p,q)$}.
\end{align}
We claim that the converse of implication (\ref{A:implication}) is also true.
Indeed, let us assume that $x_\gamma \leq x_\tau$ holds for $\gamma,\tau \in \mt{Dense}(p,q)$.
Clearly, for any index $j$, the number of 0-rows of $x_\gamma$ after its $j$-th row 
must be at least the number of 0-rows of $x_\tau$ after its $j$-th row. 
The same statement holds for the number of 0-columns before the $j$-th
column in each. 
Although this implies that 
the nonzero entries along the diagonal of the unique involution matrix $M_\gamma$ come (weakly) before 
those of $M_\tau$, this does not change the order between
the rank-control matrices, hence, we see that 
$M_\gamma \leq M_\tau$. In particular, this implies that $\sigma_\gamma \leq \sigma_\tau$.
This finishes the proof of our second main result.

\bibliography{ReferencesMC}
\bibliographystyle{plain}

\end{document}